\tikzstyle{black}=[fill=black, draw=black, shape=circle, scale=0.3]
\tikzstyle{none}=[]
\newcommand\E{\operatorname{\mathbb E{}}}
\renewcommand\Pr{\operatorname{\mathbb P{}}}
\newcommand\Med{\operatorname{ Median{}}}
\newcommand{\eps}{\epsilon}
\newcommand{\bP}{\mathbb{P}}
\newcommand{\bE}{\mathbb{E}}
\newcommand{\bGnp}{G_{n,p}} 
\newcommand{\bfG}{G^{L}_{n,q}}
\newcommand{\ifG}{G^{S}_{n,q}}
\newcommand{\sset}{Z}	
\newtheorem{theorem}{Theorem}
\newtheorem{lemma}[theorem]{Lemma}
\newtheorem{conjecture}[theorem]{Conjecture} 
\newtheorem{remark}[theorem]{Remark}
\newcommand{\refT}[1]{Theorem~\ref{#1}}
\newcommand{\refL}[1]{Lemma~\ref{#1}}
\newcommand{\refR}[1]{Remark~\ref{#1}}
\newcommand{\refS}[1]{Section~\ref{#1}}
\newcommand{\refF}[1]{Figure~\ref{#1}}
\newcommand{\refApp}[1]{Appendix~\ref{#1}}
\newcommand{\refConj}[1]{Conjecture~\ref{#1}}
\newcommand\bigpar[1]{\bigl(#1\bigr)}
\newcommand\Bigpar[1]{\Bigl(#1\Bigr)}
\newcommand\biggpar[1]{\biggl(#1\biggr)}
\newcommand\bigsqpar[1]{\bigl[#1\bigr]}
\newcommand\Biggsqpar[1]{\Biggl[#1\Biggr]}
\newcommand\bigabs[1]{\bigl|#1\bigr|}
\newcommand\Bigabs[1]{\Bigl|#1\Bigr|}
\newcommand\ceil[1]{\lceil#1\rceil}
\newcommand\bigceil[1]{\bigl\lceil#1\bigr\rceil}
\newcommand\floor[1]{\lfloor#1\rfloor}
\newcommand\bigfloor[1]{\bigl\lfloor#1\bigr\rfloor}
\newcommand{\Gnp}{G_{n,p}}
\newcommand{\Gnq}{G_{n,q}}
\newcommand{\Gnnp}[1]{G_{{#1},p}}
\newcommand{\cD}{\mathcal{D}}
\newcommand{\cE}{\mathcal{E}}
\newcommand{\is}{\omega}
\renewcommand{\Lambda}{\mathrm{M}}
\newenvironment{romenumerate}[1][-5pt]{
\addtolength{\leftmargini}{#1}\begin{enumerate}
 }{\end{enumerate}}
\let\OLDthebibliography\thebibliography
\renewcommand\thebibliography[1]{
  \OLDthebibliography{#1}
  \setlength{\parskip}{0pt}
  \setlength{\itemsep}{0pt plus 0.3ex}
}
\title{On the concentration of the chromatic number of random graphs}
\author{Erlang Surya and Lutz Warnke%
\thanks{Department of Mathematics, University of California, San Diego, La Jolla CA~92093, USA. 
E-mail: {\tt esurya@ucsd.edu, lwarnke@ucsd.edu}. 
Supported by NSF~CAREER grant~DMS-1945481, and a Sloan Research Fellowship.}}
\date{January 3, 2022; revised November 19, 2023}
\begin{document}
	
\maketitle
\thispagestyle{empty}

\begin{abstract}
Shamir and Spencer proved in the 1980s that the chromatic number of the binomial random graph~$\Gnp$ is concentrated in an interval of length at most~$\omega\sqrt{n}$,  
and in the 1990s Alon showed that an interval of length $\omega\sqrt{n}/\log n$ suffices for constant edge-probabilities~$p\in (0,1)$. 
We prove a similar logarithmic improvement of the Shamir-Spencer concentration results for the sparse case~${p=p(n) \to 0}$, 
and uncover a surprising concentration `jump' of the chromatic number in the very dense case~${p=p(n) \to 1}$.
\end{abstract}

\section{Introduction}
What can we say about the chromatic number~$\chi(\Gnp)$ of an $n$-vertex binomial random graph~$\Gnp$?
From a combinatorial perspective, it is natural to ask about the typical value of~$\chi(\Gnp)$, i.e., upper and lower bounds that are close to each other. 
From a probabilistic perspective, it is also natural to ask about the concentration of~$\chi(\Gnp)$, i.e., how much this random variable varies. 
Among these two fundamental questions, significantly less is known about the concentration question that we shall study in this paper.

In a landmark paper from~1987, 
Shamir and Spencer~\cite{SS} proved that the chromatic number~$\chi(\Gnp)$ is typically contained in an interval of length at most~$\omega\sqrt{n}$, 
where~${\omega=\omega(n)}$ is an arbitrary function with~${\omega \to\infty}$ as $n{\to\infty}$, as usual. 
For constant edge-probabilities~${p \in (0,1)}$, 
Alon noticed in the~1990s that this concentration interval length can be slightly improved to~$\omega\sqrt{n}/\log n$,  
by adapting a coloring argument of Bollob\'{a}s~\cite{B1988}, 
see~\cite[Excercise~7.9.3]{AS} and Scott's note~\cite{Scott}. 
For uniform edge-probability~$p=1/2$, Heckel and Riordan~\cite{HR} proved in~2021 
that these old concentration bounds are in fact best possible to up poly-logarithmic~factors. 

Given the increasing knowledge about the concentration of~$\chi(\Gnp)$ in the dense case where~${p \in (0,1)}$ is constant, 
it is important to clarify our understanding of the sparse case where~${p=p(n) \to 0}$ vanishes as~${n \to \infty}$. 
For edge-probabilities of form~$p=n^{-\alpha}$ with $\alpha \in (0,1/2)$, 
Shamir and Spencer proved in their \mbox{1987 paper} that~$\chi(\Gnp)$ is typically contained in an interval of length at most~$\omega\sqrt{n}p \log n$, 
and a modern inspection of their proof reveals that length~$\omega\sqrt{n}p$ suffices. 
With Alon's improvement for constant~$p$ in mind, it is natural to wonder if 
further improvements of these sparse concentration bounds are~possible.

\enlargethispage{\baselineskip}

In this paper we sharpen the concentration of~$\chi(\Gnp)$ in the sparse case~${p=p(n) \to 0}$, 
by extending Alon's logarithmic improvement to smaller edge-probabilities: 
{\refT{SimplifiedMainSmall}} improves the Shamir-Spencer bound from~1987, 
by showing that~$\chi(\Gnp)$ is typically contained in an interval of length at most~$\omega\sqrt{n}p /\log n$. 
%
\begin{theorem}[Improved concentration bound]\label{SimplifiedMainSmall}
Let~${\omega=\omega(n)\to\infty}$ as ${n\to\infty}$ be an arbitrary function, and let~${\delta \in (0,1)}$ be a constant. 
If the edge-probability~${p=p(n)}$ satisfies ${n^{-1/2+\delta} \ll p\le 1-\delta}$, 
then there is an interval of length~$\floor{\omega\sqrt{n}p /\log n}$ 
that contains the chromatic number~$\chi(\bGnp)$ of the random graph~$\Gnp$ with high~probability, 
i.e., with probability tending to one as~${n \to \infty}$.
\end{theorem}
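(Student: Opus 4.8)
The plan is to adapt the classical two-step argument of Bollob\'as-Alon (as presented in \cite[Exercise~7.9.3]{AS}) to the sparse regime, combined with the Shamir-Spencer martingale concentration in the stronger ``first moment'' form. The key observation is that if a typical $\Gnp$ can be properly colored using colors after removing a small set $W$ of $o(\sqrt n p/\log n)$ vertices, and if the number of colors needed for $\Gnp[V\setminus W]$ is itself concentrated in a short interval, then coloring each vertex of $W$ with its own fresh color gives a bound on $\chi(\Gnp)$ that is concentrated in an interval of length comparable to $|W|$. So the goal is to find such a ``core'' subgraph whose chromatic number is automatically concentrated.

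**The key steps.**

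First I would invoke a vertex-exposure martingale argument (Azuma-Hoeffding, as in Shamir-Spencer) to show that $\chi(\Gnp)$ is concentrated around its mean in an interval of length $O(\omega'\sqrt{np})$ for any slowly growing $\omega'$; more precisely, I would use the sharper one-sided bound controlling the lower tail via the smallest number $u=u(n)$ such that $\Pr[\chi(\Gnp)\le u]$ is not too small. Second, following Alon's trick, I would argue that with high probability $\Gnp$ contains a set $W$ of at most $\varepsilon\sqrt{n}p/\log n$ vertices such that $\Gnp - W$ has chromatic number exactly (or within $1$ of) the \emph{median} value; this uses that the number of graphs on $n$ vertices requiring many colors, combined with a greedy/random partial coloring, forces most vertices into a structured colorable part. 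Third, I would combine these: letting $u_0$ be a value with $\Pr[\chi(\Gnp-W)\le u_0]\ge 1/2$ attained for a random induced subgraph on $n-|W|$ vertices, one gets $\chi(\Gnp)\le u_0 + |W| \le u_0 + \varepsilon\sqrt np/\log n$ w.h.p., while the Shamir-Spencer lower bound gives $\chi(\Gnp)\ge u_0 - \varepsilon\sqrt np/\log n$ w.h.p. Finally I would check that the hypothesis $n^{-1/2+\delta}\ll p \le 1-\delta$ makes $\sqrt{np}\log n = o(\sqrt{n}p/\log n)$ fail in the wrong direction --- so actually the interval one must produce has length $\Theta(\sqrt{n}p/\log n)$, which dominates $\sqrt{np}\,\mathrm{polylog}$ precisely because $p \gg n^{-1/2}$; this is where the range of $p$ is used.

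**Main obstacle.**

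The hard part is the second step: producing the small deletion set $W$ whose removal leaves a graph with \emph{tightly concentrated} chromatic number. In Alon's constant-$p$ argument, one uses that $\chi(\Gnp)\approx n/(2\log_b n)$ and that a random set of $\approx n/\log^2 n$ vertices can absorb the ``bad'' behavior; adapting this to $p\to 0$ requires a quantitative coloring result showing that $\Gnp$ restricted to all but $o(\sqrt np/\log n)$ vertices can be colored with close to $\chi(\Gnp)$ colors in a way that is robust --- essentially one needs that most colorings are achievable after a small perturbation, which in the sparse case demands care because the independence number and the structure of optimal colorings behave differently than for constant $p$. I expect the technical heart to be a union-bound / entropy-counting estimate controlling the number of ``almost-optimal'' color classes of size $\Theta(\log(np)/p)$, replacing $\Theta(\log n)$ in the dense case, and verifying that the error terms stay below $\sqrt{n}p/\log n$ throughout the stated range of $p$.
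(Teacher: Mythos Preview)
Your proposal has the structure of the Shamir--Spencer/Alon argument inverted. The vertex-exposure martingale shows (see the paper's \refL{workhorse}) that there is a set $W \subseteq [n]$ of size at most $z = \omega\sqrt{n}$ such that $\chi\bigl(\Gnp[[n]\setminus W]\bigr) \le \Lambda$, where $\Lambda$ is essentially the median of $\chi(\Gnp)$; the concentration interval length is then governed by the number of \emph{extra} colors needed to color $\Gnp[W]$, i.e., by $\max_{|W|\le z}\chi(\Gnp[W])$, not by $|W|$ itself. Your claim that the martingale already gives concentration of length $O(\omega'\sqrt{np})$ is incorrect (vertex exposure gives $O(\omega'\sqrt{n})$, with no dependence on $p$), and your proposed deletion set of size $o(\sqrt{n}p/\log n)$ cannot be produced by any martingale or known argument: for $p = o(1)$ this would be far smaller than $\sqrt{n}$, and would in fact already imply a concentration result strictly stronger than the theorem you are trying to prove. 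Your own comparison at the end of Step~3 essentially notices this inconsistency without resolving it.

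The actual task is therefore to bound $\chi(\Gnp[W]) = O(zp/\log n)$ uniformly over \emph{all} subsets $W$ with $|W| \le z = \omega\sqrt{n}$, and this is where the hypothesis $p \gg n^{-1/2+\delta}$ enters (it ensures $zp \gg (\log n)^{1+\Omega(1)}$, so that greedy extraction of independent sets of size $\Theta(\log n/p)$ from $W$ suffices). Consequently your ``main obstacle'' is not the real one: the genuine difficulty, discussed around equation~\eqref{eq:optimal:2} in the paper, is that the Bollob\'as/Alon use of Janson-type inequalities to guarantee large independent sets in every $m$-vertex subset fails a union bound once $p$ drops below about $n^{-\sigma}$ for some small $\sigma>0$. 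The paper's fix is to replace that large-deviation step with a Chernoff bound on subgraph edge counts (hence minimum degrees), after which a purely greedy procedure (Lemmas~\ref{greedy} and~\ref{Degeneracy}) deterministically finds the needed independent sets on the high-probability event, sidestepping the union-bound bottleneck entirely.
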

Our proof of \refT{SimplifiedMainSmall} refines the basic ideas of Shamir, Spencer and Alon using two greedy algorithms, 
which enable us to bypass large deviation inequalities such as Janson's inequality 
via more robust Chernoff bound based arguments. 
Note that the concentration bound of length~$\floor{\omega\sqrt{n}p /\log n}$ 
is meaningful, 
since typically~${\chi(\bGnp)=\Theta(np/\log(np))}$ holds~\mbox{\cite{B1988,L1991a,JLR}}.
Furthermore, the restriction to~${p \gg n^{-1/2+\delta}}$ is close to best possible, 
since for~${n^{-1} \ll p \ll n^{-1/2-\delta}}$ the chromatic number 
is concentrated on two different values~\mbox{\cite{L1991,AK,AN}}, 
whereas the bound from~\refT{SimplifiedMainSmall} would imply one-point concentration;
see~\refS{sec:main} for our more general chromatic number 
concentration bounds for other ranges of edge-probabilities~$p=p(n)$.
%

In this paper we also uncover a surprising concentration behavior of~$\chi(\Gnp)$ in the very dense case 
where~${p=p(n) \to 1}$ tends to one as~${n \to \infty}$:
\refT{surprise} shows that the typical length of the shortest interval containing~$\chi(\Gnp)$ 
undergoes a polynomial `jump' around edge-probability~${p=1-n^{-1+o(1)}}$; see~\refF{fig:conj}. 
\begin{theorem}[Concentration `jump' in the very dense case]\label{surprise}
Given~$\eps>0$, 
the following holds for the random graph~$\Gnp$ with edge-probability~$p=p(n)$, 
setting~$\varphi=\varphi(n,p):=n(1-p)$.%
\vspace{-0.25em}\begin{romenumerate}
	\parskip 0em  \partopsep=0pt \parsep 0em 
	\item\label{eq:surprise:i} If $n^{-o(1)} \le \varphi \ll \log n$, 
		then no interval of length~${\bigl\lfloor n^{1/2-\eps} \bigr\rfloor}$ 
		contains $\chi(\Gnp)$ with high~probability.\vspace{-0.125em}
	\item\label{eq:surprise:ii}  If $\log n\ll \varphi \le n^{o(1)}$, 
		then there is an interval of length~${\bigl\lfloor n^\eps \bigr\rfloor}$ 
		that contains $\chi(\Gnp)$ with high~probability.\vspace{-0.125em}
\end{romenumerate}
\end{theorem}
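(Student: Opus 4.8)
The plan is to pass to the complement. Writing $q := 1-p = \varphi/n$, a proper colouring of $\Gnp$ is the same thing as a partition of the vertex set into cliques of $\overline{\Gnp}$, and $\overline{\Gnp}$ has the distribution of $\Gnq$; hence $\chi(\Gnp)$ has the same distribution as the clique-cover number $\overline{\chi}(\Gnq)$, the least number of cliques of $\Gnq$ needed to partition its vertices. The key structural feature we shall use is that $\overline{\chi}$ is additive over connected components. Since $q = n^{-1+o(1)}$ throughout, a first-moment estimate gives that the expected number of $K_4$'s in $\Gnq$ is $n^{-2+o(1)}\to 0$, and the expected number of triangles is $\varphi^3/6+o(1) = n^{o(1)}$; thus whp $\Gnq$ has clique number at most $3$ and at most $n^{o(1)}$ triangles.

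For the concentration statement (the range $\varphi \gg \log n$): since $qn = \varphi$ lies well above the perfect-matching threshold $\varphi \sim \log n$, the classical Erd\H{o}s--R\'enyi theorem shows that $\Gnq$ has a perfect matching whp (a near-perfect one for odd $n$), so $\overline{\chi}(\Gnq) \le n - \nu(\Gnq) = \lceil n/2\rceil$ whp. Conversely, consider an optimal clique partition of $\Gnq$: whp all its cliques are vertices, edges or triangles, and if it uses $a$ edges and $b$ triangles then it has $n-(a+2b)$ parts, so $\overline{\chi}(\Gnq) = n-(a+2b)$. Since $2a+3b \le n$ and whp $b$ is at most the number of triangles in $\Gnq$, which is $n^{o(1)}$, we obtain $a+2b \le n/2 + n^{o(1)}$, hence $\overline{\chi}(\Gnq) \ge \lceil n/2\rceil - n^{o(1)}$ whp. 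Therefore $\chi(\Gnp) \in \bigl[\lceil n/2\rceil - n^{o(1)},\,\lceil n/2\rceil\bigr]$ whp, an interval of length at most $\lfloor n^{\eps}\rfloor$ for large $n$.

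For the anti-concentration statement (the range $n^{-o(1)} \le \varphi \ll \log n$): any interval of length $\lfloor n^{1/2-\eps}\rfloor$ contains $O(n^{1/2-\eps})$ integers, and the claim only gets weaker as $\eps$ grows, so it suffices to show, for all small $\eps$, that $\Pr[\overline{\chi}(\Gnq) \in I] = o(1)$ uniformly over intervals $I$ of $\lfloor n^{1/2-\eps}\rfloor$ consecutive integers. Let $U$ be the union of all components of $\Gnq$ of size larger than $(\log n)^2$, and condition on $\mathcal F_U := \sigma(U, \Gnq[U])$; whp $|V\setminus U| \ge n^{1-o(1)}$, and conditionally on $\mathcal F_U$ the graph $\Gnq[V\setminus U]$ has the law of $G_{|V\setminus U|,q}$ conditioned on the (whp automatic) event that all of its components are small. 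By component-additivity, $\overline{\chi}(\Gnq) = \overline{\chi}(\Gnq[U]) + \sum_H N_H\,\overline{\chi}(H)$, where $H$ runs over isomorphism types of small connected graphs, $N_H$ is the number of components of $\Gnq$ isomorphic to $H$, and $\overline{\chi}(\Gnq[U])$ is $\mathcal F_U$-measurable. A routine second-moment computation shows that, on the typical event, $\var\bigl(\sum_H N_H\overline{\chi}(H) \mid \mathcal F_U\bigr) = n^{1-o(1)}$: for example the number $N_{P_3}$ of path-components on three vertices (each contributing $\overline{\chi}(P_3)=2$) has conditional mean of order $n\varphi^2 e^{-3\varphi}$, which is $n^{1-o(1)}$ precisely because $\varphi \ll \log n$ forces $e^{-c\varphi} = n^{-o(1)}$, and the summands are almost uncorrelated. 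As $\sum_H N_H\overline{\chi}(H)$ is a sum of many almost independent, bounded, integer-valued contributions whose lattice span equals $1$ (singleton components contribute $\overline{\chi}(P_1)=1$), a local limit theorem gives, on the typical event, $\sup_j \Pr\bigl[\sum_H N_H\overline{\chi}(H) = j \mid \mathcal F_U\bigr] = O(n^{-1/2+o(1)})$, whence $\Pr[\overline{\chi}(\Gnq)\in I \mid \mathcal F_U] \le |I| \cdot O(n^{-1/2+o(1)}) = o(1)$; averaging over $\mathcal F_U$ (the atypical event having probability $o(1)$) completes the proof.

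The hard part is the last step: establishing a \emph{local} (lattice) limit theorem, with a power-saving error term, for the additive small-component functional $\sum_H N_H \overline{\chi}(H)$ of the sparse random graph, conditioned on its largest component. In the genuinely subcritical range $\varphi \le 1-\Omega(1)$ this is classical (joint asymptotic normality of the small-component counts by the method of moments, together with the observation that the lattice span is $1$), but the supercritical range $1 \le \varphi \ll \log n$ needs care, because conditioning on the giant distorts the law of the rest of the graph. I expect this to be dealt with either by a sprinkling / two-round exposure argument that re-reveals last the edges inside a positive fraction of the small components, or by passing to a Poisson-cloning description of the small-component structure, under which $\sum_H N_H\overline{\chi}(H)$ becomes, up to negligible error, an honest sum of independent bounded integer random variables, so that a Kolmogorov--Rogozin small-ball inequality applies directly and no central limit theorem is required.
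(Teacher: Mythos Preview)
Your argument for part~\ref{eq:surprise:ii} is correct and essentially matches the paper's: the paper cites Johansson--Kahn--Vu for a $K_r$-factor in the complement (which for $r=2$ is exactly the perfect-matching threshold you invoke) and bounds the corresponding error term $Y=\sum_{i>2}iX_i$ by Markov's inequality rather than by the triangle count directly.

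For part~\ref{eq:surprise:i} you identify the right mechanism but leave a genuine gap: the local limit theorem (or Kolmogorov--Rogozin bound) for the component functional $\sum_H N_H\,\overline{\chi}(H)$ is not proved, and as you yourself concede is delicate in the supercritical range because the $N_H$ are dependent and conditioning on the giant distorts the law of the remainder. The paper circumvents this entirely by a much simpler reduction. First, the deterministic sandwich $n-M-Y\le \chi(\Gnp)\le n-M$, with $M$ the maximum matching of $\Gnq$ and $Y=\sum_{i\ge 3}iX_i$ (here $X_i$ counts independent $i$-sets in $\Gnp$), together with $\E Y=\Theta(\varphi^3)=n^{o(1)}\ll n^{1/2-o(1)}$, reduces the problem to anti-concentrating $M$ alone. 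Second---and this is the key simplification---one conditions not on components of size larger than $(\log n)^2$ but on the subgraph $G^L$ consisting of all components of $\Gnq$ of size at least \emph{three}. Then $M=X_1+X_2$ where $X_2$ is $G^L$-measurable and $X_1$ is the number of isolated edges in the remainder, which conditionally is $G_{Y',q}$ (with $Y'=n-|V(G^L)|$) restricted to having only components of size at most two. The conditional law of $X_1$ now has the closed-form ratio
\[
\frac{\Pr(X_1=m\mid G^L)}{\Pr(X_1=m+1\mid G^L)}=\frac{(1-q)(m+1)}{q\binom{Y'-2m}{2}},
\]
and an elementary analysis of this ratio near its mode $m_0\sim \tbinom{n}{2}q(1-q)^{2n-4}$ gives anti-concentration of $X_1$ (hence of $M$ and of $\chi(\Gnp)$) on intervals of length $d\sqrt{m_0}=n^{1/2-o(1)}$ for small $d>0$, by direct computation---no local limit theorem, no sprinkling, no Poisson cloning required. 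Your cut-off at $(\log n)^2$ is too coarse: cutting at three leaves only isolated vertices and edges, whose joint conditional distribution is explicit.
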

%
The concentration bounds~\ref{eq:surprise:i}--\ref{eq:surprise:ii} demonstrate 
that the typical length of the shortest interval containing~$\chi(\Gnp)$
is neither monotone nor smooth in~$p$, which both are surprising features. 
We believe that the intriguing concentration jump of~$\chi(\Gnp)$ described by \refT{surprise} 
happens infinitely many times as we vary the edge-probability~${p=1-n^{-\Omega(1)}}$; 
see~\refS{ptooneregime} and~\refF{conjfig} for more~details and further~results. 

\begin{figure}\label{conjfig}
	\centering
\begin{tikzpicture}
\begin{pgfonlayer}{nodelayer}
\node [style=none] (0) at (0, 0) {};
\node [style=none] (1) at (0, 2) {};
\node [style=none] (2) at (8, 0) {};
\node [style=none] (3) at (4, 0) {};
\node [style=none] (4) at (6, 0) {};
\node [style=none] (6) at (5.33, 2) {};
\node [style=none] (7) at (4, 2) {};
\node [style=none] (8) at (5.33, 0) {};
\node [style=none] (9) at (5.66, 0) {};
\node [style=none] (11) at (6, 2) {};
\node [style=none] (12) at (0, -0.5) {0};
\node [style=none] (13) at (4, -0.5) {1};
\node [style=none] (14) at (6, -0.5) {$\frac32$};
\node [style=none] (15) at (5.66, -0.5) {};
\node [style=none] (17) at (8, -0.5) {2};
\node [style=none] (18) at (8.75, 0) {$x$};
\node [style=none] (20) at (0, 2.5) {$y$};
\node [style=none] (22) at (5.33, -0.5) {$\frac43$};
\node [style=none] (23) at (-0.5, 2) {$\frac12$};
\node [style=none] (24) at (7, 1) {$\dots$};
\node [style=none] (25) at (6.4, -0.5) {$\frac85$};
\node [style=none] (26) at (6.4, 2) {};
\node [style=none] (28) at (6.4, 0) {};
\end{pgfonlayer}
\begin{pgfonlayer}{edgelayer}
\draw (0.center) to (7.center);
\draw[-stealth] (0.center) to (2.center);
\draw[-stealth] (0.center) to (1.center);
\draw [dotted] (7.center) to (3.center);
\draw [dotted] (6.center) to (8.center);
\draw [dotted] (11.center) to (4.center);
\draw (3.center) to (6.center);
\draw (11.center) to (8.center);
\draw (26.center) to (4.center);
\draw [dotted] (26.center) to (28.center);
\end{pgfonlayer}
\end{tikzpicture}
\caption{The exponent of the concentration interval length of very dense random graphs~$\Gnp$: 
when~${n^2(1-p)=n^{x+o(1)}}$ with~${x \in (0,2)}$, 
then \refConj{bigconj} predicts that~${n^{y+o(1)}}$ with~$y=y(x) \in [0,1/2]$ 
is the length of the shortest interval that contains~$\chi(\Gnp)$ with high probability. 
Interestingly, this proposes that the concentration interval length of $\chi(\Gnp)$ 
has infinitely many polynomial `jumps' from~${n^{1/2+o(1)}}$ to~${n^{o(1)}}$ 
as we vary the edge-probability~$p=1-n^{-2+x+o(1)}$; 
see \refT{surprise} and \refS{ptooneregime} for more~details.\label{fig:conj}}%
\end{figure}
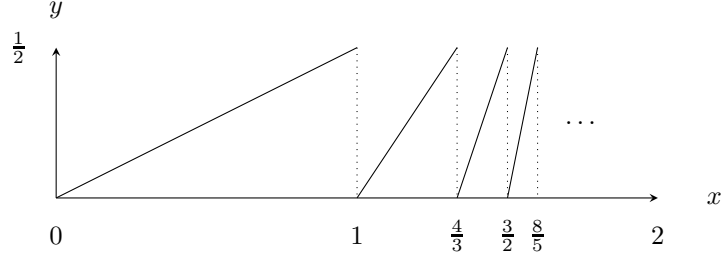


\section{Concentration bounds for~$\chi(\Gnp)$}\label{sec:main}
The following concentration result for the chromatic number~$\chi(\Gnp)$ generalizes \refT{SimplifiedMainSmall}, 
by removing the assumed lower bound on the edge-probability~$p=p(n)$. 
Interestingly, the form of the concentration bounds~\eqref{ggcase}--\eqref{llcase} changes when~$\omega\sqrt{n}p$ is around~$\log n$ 
(here we lose nothing by formally ignoring the case~${\omega\sqrt{n}p= \Theta(\log n)}$, 
since in that case we can then simply apply~\eqref{llcase} after replacing~$\omega$ with~$\sqrt{\omega}$, say).  
Note that $L=\Theta(zp/\log(zp))$ when $zp\gg (\log n)^{1+\eps}$, so that \refT{MainSmall} implies \refT{SimplifiedMainSmall} by rescaling~$\omega$. 
\begin{theorem}\label{MainSmall}
Let~$\omega=\omega(n)\to\infty$ as $n\to\infty$ be an arbitrary function, and let~$\gamma \in (0,1)$ be a constant. 
If the edge-probability~$p=p(n)$ of the random graph~$\Gnp$ satisfies~$0 < p \le \gamma$, 
then there is an interval of length at~most~$L=L(n,p,\gamma)$  
that contains the chromatic number~$\chi(\bGnp)$ with high~probability, 
with 
\begin{subequations}
\begin{numcases}{L \: := \: }
\frac{C zp}{\log(zp/\log n)} & \text{if $zp\gg \log n$,} \label{ggcase}\\
\frac{C \log n}{\log(\log n/ zp)} & \text{if $zp\ll \log n$,} \label{llcase}
\end{numcases} 	
\end{subequations}
where~$C=C(\gamma)>0$ is a constant and~$z=z(n,\omega)$ is defined as 
\begin{equation}\label{z}
z=z(n,\omega):=\omega\sqrt{n} .
\end{equation}
\end{theorem}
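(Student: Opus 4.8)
The aim is to produce a deterministic integer $u=u(n,p,\gamma)$ with $\P[\chi(\Gnp)<u]\to 0$ and $\P[\chi(\Gnp)>u+L]\to 0$, so that the interval $[u,u+L]$ of length $L$ contains $\chi(\Gnp)$ with high probability. The first estimate is essentially free: if $u$ is the least integer $k$ with $\P[\chi(\Gnp)\le k]\ge\epsilon$ for a threshold $\epsilon=\epsilon(n)\to 0$ that we are free to let decay as slowly as the rest of the argument permits (say $\epsilon=n^{-1/\log\log n}$), then $\P[\chi(\Gnp)<u]=\P[\chi(\Gnp)\le u-1]<\epsilon\to 0$ by minimality of $u$. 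So the entire content is the upper tail $\P[\chi(\Gnp)>u+L]\to 0$ for this particular~$u$.

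For the upper tail I would use the Shamir--Spencer two-round exposure, refined in the style of Alon. Reveal $\Gnp$ first on a set $A$ of size $n-s$ and then on a ``reservoir'' $B$ of size $s=s(n,p)$, chosen of order $z$ up to the appropriate logarithmic factors. Since $\chi(\Gnp)\le\chi(\Gnp-S)+\chi(\Gnp[S])$ for every vertex set $S$, it suffices to produce a set $S^*$ of $s$ vertices such that, with high probability, $\chi(\Gnp-S^*)\le u+o(L)$ and $\chi(\Gnp[S^*])\le o(L)+O(sp/\log(sp))$; by the choice of $s$ and the unspecified constant $C=C(\gamma)$ in $L$, the right-hand sides then sum to at most $L$. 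One natural choice is to take $S^*$ to be a minimiser of $\min\{\chi(\Gnp-S):|S|=s\}$, which automatically satisfies $\P[\chi(\Gnp-S^*)\le u]\ge\P[\chi(\Gnp)\le u]\ge\epsilon$.

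Both required bounds are where the two greedy algorithms — and the consequent ability to replace Janson's inequality by Chernoff bounds — do the real work. To recolour the reservoir $\Gnp[S^*]$ cheaply one peels off greedily-constructed independent sets, colour by colour; since each extracted set has size equal to a sum of nearly-independent increments, a Chernoff bound shows that $O(sp/\log(sp))$ colours suffice except with probability $e^{-\omega'' s\log n}$ for a slowly growing $\omega''=\omega''(n)$. The point of this strong, Chernoff-type tail — rather than the merely $1-o(1)$ success probability that Janson's inequality or a second-moment argument would supply — is that it survives the union bound over all $\binom{n}{s}\le e^{s\log n}$ possible reservoirs, which is needed because $S^*$ is a minimiser rather than a typical set; and the constant-factor wastefulness of greedy colouring is harmless since $L$ carries the free constant~$C$. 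The same greedy-peeling analysis is the engine for the bound $\chi(\Gnp-S^*)\le u+o(L)$: running the colour-by-colour peeling on $\Gnp$ itself, the leftover after each colour is controlled by these nearly-independent increments, so it concentrates far more tightly than the $\Theta(\sqrt n)$ fluctuation that a bare vertex-exposure (Azuma) martingale permits, and this improved concentration, combined with $\P[\chi(\Gnp)\le u]\ge\epsilon$, is what pins the relevant quantity at $u+o(L)$. The two formulas \eqref{ggcase}--\eqref{llcase} arise from optimising $s$ and the greedy parameters according to whether $zp\gg\log n$ — in which case recolouring the reservoir costs $\Theta(zp/\log(zp/\log n))$ colours — or $zp\ll\log n$, in which case the relevant quantity is governed instead by $\log n$ together with the combinatorics of independent sets of size $\Theta(\log n)$, giving $\Theta(\log n/\log(\log n/zp))$.

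The main obstacle I anticipate is the Chernoff analysis of the repeated greedy extraction: one needs a tail bound that is at the same time strong enough — of order $e^{-\omega'' s\log n}$ — to beat the union bound over reservoirs and to force the $o(L)$ window, and valid \emph{uniformly} over the whole range $0<p\le\gamma$, in particular near the transition $zp\asymp\log n$ between \eqref{ggcase} and \eqref{llcase} and at the very sparse end, where the extracted independent sets contain only $\Theta(\log n)$ vertices and the increments are only approximately independent. A secondary difficulty is the bookkeeping required so that the $\omega$-free quantile $u$, the reservoir size $s$, and all greedy parameters remain mutually consistent across the two rounds and across the range of $p$; as the excerpt's ``replace $\omega$ by $\sqrt\omega$'' remark signals, I would carry a single auxiliary slowly growing function $\omega''\le\omega$ throughout and fix the remaining slack — the exponent in $\epsilon=n^{-o(1)}$, the split of $L$, the size $s$ — only at the very end.
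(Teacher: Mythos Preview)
Your overall framework matches the paper's: define a quantile $u$, find $S^*$ of size at most $z$ with $\chi(\Gnp-S^*)\le u$, and recolor $S^*$ with $O(L)$ colors. But your explanation of the first step contains a genuine misconception. You claim that boosting $\P[\chi(\Gnp-S^*)\le u]$ from $\ge\epsilon$ to $1-o(1)$ requires ``improved concentration'' from greedy peeling, tighter than the $\Theta(\sqrt n)$ vertex-exposure Azuma bound. No such improvement is used or available: Heckel--Riordan shows $\chi(\Gnp)$ genuinely fluctuates by $\sqrt n/\mathrm{polylog}$, and no greedy variant concentrates better in a way that controls~$\chi$. The paper (Lemma~\ref{workhorse}), exactly as in Shamir--Spencer, instead applies the bounded-differences inequality to $Y:=\min\{|S|:\chi(\Gnp-S)\le u\}$: from $\P[Y=0]\ge\epsilon$ one gets $\E Y\le\sqrt{2n\log(1/\epsilon)}$, hence $\P[Y>z]\to 0$ once $\log(1/\epsilon)\ll\omega^2$ (so take $\epsilon=1/\omega$; your $\epsilon=n^{-1/\log\log n}$ is too small for slowly growing~$\omega$, though you flag this as adjustable). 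This already yields $\chi(\Gnp-S^*)\le u$ whp with no greedy input at all. The \emph{entire} logarithmic gain over Shamir--Spencer lives in the recoloring of the $\le z$ removed vertices.

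For that recoloring, your ``Chernoff on nearly-independent increments'' of greedy independent sets is vague --- those increments are genuinely dependent --- and differs from the paper's route. The paper applies Chernoff to edge counts $e(\Gnp[S])$, which \emph{are} i.i.d.\ sums, to establish that whp every $S\subseteq[n]$ of size $\ge\Theta(\log n/p)$ has $\delta(\Gnp[S])\le(1+\eps)p|S|$, and every small $S$ has bounded minimum degree (events $\cD$, $\cE$ in \refS{sec:zplarge}). Once $\cD\cap\cE$ holds, both the greedy independent-set extraction (Lemma~\ref{greedy}) and the degeneracy coloring of the leftover (Lemma~\ref{Degeneracy}) are \emph{deterministic} for every $Z$ with $|Z|\le z$ simultaneously; the union bound over reservoirs is thus absorbed into $\cD\cap\cE$ and no $e^{-\omega'' s\log n}$ tail on a coloring event is needed. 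Finally, the split \eqref{ggcase}--\eqref{llcase} is not an optimization over a reservoir size~$s$: when $zp\ll\log n$ the independent-set phase is skipped entirely and degeneracy alone (Lemma~\ref{Degeneracy}) colors any $Z$ with $|Z|\le z$ using $O\bigl(\log n/\log(\log n/zp)\bigr)$ colors.
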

%


%
\refT{MainSmall} implies that~$\chi(\Gnp)$ is 
typically contained in an interval of length $O(\omega \sqrt{n}/\log n)$ for constant~${p \in (0,1)}$, 
and in an interval of length~$O(1)$ for~${p \le n^{-1/2-\delta}}$.  
These bounds match the best-known upper bounds up to constant factors~\cite{SS,AK,Scott}, 
and the best-known lower bounds up to poly-logarithmic factors~\cite{AK,HR}. 
For~${n^{-1/2} \le p \ll 1}$ we leave it as an interesting open problem whether the 
concentration bounds~\eqref{ggcase}--\eqref{llcase} for~$\chi(\Gnp)$ 
are close to best possible or~not.

The starting point for Theorems~\ref{SimplifiedMainSmall} and~\ref{MainSmall} as well as many earlier concentration proofs~\cite{SS,L1991,kriv,AK,Scott} 
is an observation about the chromatic number~$\chi(\Gnp)$ that can be traced back to Shamir and Spencer~\cite{SS}, 
which intuitively says (see~\refL{workhorse}) that with high probability the median of~$\chi(\Gnp)$ satisfies
\begin{equation}\label{eq:basic:bound}
\Bigabs{\chi(\Gnp)-\Med\bigpar{\chi(\Gnp)}} \: \le \: \max_{\sset \subseteq [n]: |\sset| \le z}\chi\bigpar{\bGnp[\sset]},
\end{equation}
where $z$ is defined as in~\eqref{z}.
The induced subgraph~$\Gnp[\sset]$ has the same distribution as~$\Gnnp{|Z|}$, 
so for suitable ranges of~$p$ we know~\cite{L1991a,JLR} that for a \emph{fixed} vertex-subset~${\sset \subseteq [n]}$ with~${|Z| \approx z}$ 
we typically have~${\chi(\bGnp[\sset])=\Theta(zp/\log(zp))}$, 
which suggests that~\refT{SimplifiedMainSmall} and~\eqref{ggcase} are 
more or less the best concentration bounds we can deduce from~\eqref{eq:basic:bound}.

Alon noted that for constant~${p \in (0,1)}$ one can bound~\eqref{eq:basic:bound} 
by adapting \mbox{Bollob\'{a}s'} analysis~\cite{B1988} 
of~$\chi(\Gnp)$ to~$\chi\bigpar{\bGnp[\sset]}$ for \emph{all} vertex-subsets~${\sset \subseteq [n]}$ with~${|Z| \le z}$. 
Here the main probabilistic ingredient are large deviation inequalities such as Janson's inequality~\mbox{\cite{Janson,RiordanWarnke2015}}: 
these allow us to show that, with high probability, all vertex-subsets of~$\Gnp$ 
with~${m \approx \sqrt{n}/(\log n)}$ vertices contain an independent set on~${k  = \Theta(\log n/p\bigr)}$ vertices. 
To color any~$\Gnp[\sset]$, we can thus iteratively remove a largest independent set from~$Z$ and assign its vertices one new color, 
until at most~$m$ uncolored vertices remain, which then each obtain a new color. 
For constant~${p \in (0,1)}$ this 
yields~${\chi\bigpar{\bGnp[\sset]} \le |\sset|/k + m}  = {O(zp/\log n)}$  
for all relevant~${\sset \subseteq [n]}$, 
which together with~\eqref{eq:basic:bound} recovers the $\omega\sqrt{n}p /\log n$ concentration bound from~\refT{SimplifiedMainSmall} up to irrelevant constant~factors.

On first sight one might think that an extra twist can extend the outlined coloring argument for~$\chi\bigpar{\bGnp[\sset]}$ to the sparse case~${p=p(n) \to 0}$ considered by~\refT{SimplifiedMainSmall}: 
indeed, a simple greedy algorithm can color the remaining~$m$ vertices with only~${O(mp)}$ colors,  
so for any relevant~${\sset \subseteq [n]}$ we should overall only~need
\begin{equation}\label{eq:optimal:2}
\chi\bigpar{\bGnp[\sset]} \: \le \: \frac{|\sset|}{k} + O\bigpar{mp} = O\biggpar{\frac{zp}{\log n}}
\end{equation}
colors, which together with~\eqref{eq:basic:bound} seemingly recovers the $\omega\sqrt{n}p /\log n$ concentration bound from~\refT{SimplifiedMainSmall} up to constant~factors. 
Unfortunately, there is another major 
 \mbox{bottleneck} we inherited from \mbox{Bollob\'{a}s'} analysis of~$\chi(\Gnp)$: 
due to union bound issues\footnote{Using large deviation inequalities inequalities the issue is that, well before~${p=n^{-1/2+\delta}}$, 
the probability that one \mbox{$m$-vertex} subset does not contain a \mbox{$k$-vertex} independent set 
is no longer small enough to take a union bond over all {$m$-vertex} subsets 
(no matter if ones uses Janson's inequality~\mbox{\cite{Janson,RiordanWarnke2015}}, Talagrand's inequality~\cite{Talagrand}, or the bounded differences inequality~\mbox{\cite{McDiarmid1989,W16}});   
this issue is also the reason why \mbox{Bollob\'{a}s'} analysis of~$\chi(\Gnp)$ breaks around~${p=n^{-1/3}}$, see~\cite[Section~3]{KM2015} and~\cite[Section~7.5]{JLR}.} 
large deviation inequalities can only guarantee \mbox{$k$-vertex} independent sets in every \mbox{$m$-vertex} subset 
as long \mbox{as~${p \ge n^{-\sigma+o(1)}}$} for some small~${\sigma>0}$. 
To extend the range of~${p=p(n)}$ we 
borrow ideas from \mbox{Grimmett} and \mbox{McDiarmid's} earlier analysis~\cite{GM1974} of~$\chi(\Gnp)$ from~1975, 
and make Chernoff bounds the main probabilistic ingredient: 
these allow us to show that, with high probability, 
all large subgraphs of~$\Gnp$ contain a vertex whose degree is small relative to the size of the subgraph, 
which enables a simple greedy algorithm to find independent sets of size $\Theta(\log n/p\bigr)$ in any \mbox{$m$-vertex} subset of~$\Gnp$ 
(this was also used by Scott~\cite{Scott} for constant~$p$).
We find it surprising that a combination of the two discussed simple greedy based refinements 
not only yields~\eqref{eq:optimal:2} and thus \refT{SimplifiedMainSmall}, 
but also establishes the more general \refT{MainSmall} by a refined analysis; 
see~\refS{sec:proofs} for the~details.


In our discussion of the concentration of~$\chi(\Gnp)$ we so far tacitly ignored the very dense case~${p=p(n) \to 1}$: 
this conveniently allowed us to work with independent sets of size~$\Theta(\log n/p)$ instead of size~$\Theta\bigpar{\log_{1/(1-p)}(n)}$, 
which in turn allowed us to write~$\Theta(p)$ instead of~${\log\bigpar{1/(1-p)}}={\sum_{k \ge 1}p^k/k}$
in the numerators appearing in \refT{SimplifiedMainSmall} as well as equations~\eqref{ggcase} and~\eqref{eq:optimal:2}. 
\refT{MainLarge} effectively says that we only need to reverse this 
simplification 
in order to extend Theorems~\ref{SimplifiedMainSmall} and~\ref{MainSmall} 
to edge-probabilities~$p=p(n) \to 1$ that tend to~one.  
\begin{theorem}\label{MainLarge}
Let~$\omega=\omega(n)\to\infty$ as $n\to\infty$ be an arbitrary function, and let~$\gamma \in (0,1)$ be a constant. 
If the edge-probability~$p=p(n)$ of the random graph~$\Gnp$ satisfies~$\gamma \le p < 1$, 
then there is an interval of length $\floor{C\omega\sqrt{n}/\log_{1/(1-p)}(n)}$ 
that contains~$\chi(\bGnp)$ with high~probability, 
where $C=C(\gamma)>0$ is a constant. 
\end{theorem}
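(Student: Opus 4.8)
The plan is to reduce the very dense case $\gamma \le p < 1$ to the already-proven \refT{MainSmall} (or more precisely its proof scheme) by tracking the correct ``independent set size'' parameter. The key observation is the Shamir--Spencer inequality~\eqref{eq:basic:bound}, which holds for every edge-probability and bounds the deviation of $\chi(\Gnp)$ from its median by $\max_{Z \subseteq [n], |Z| \le z} \chi(\Gnp[Z])$ with $z = \omega\sqrt n$. So it suffices to show that, with high probability, \emph{every} vertex-subset $Z$ with $|Z| \le z$ satisfies $\chi(\Gnp[Z]) = O\bigl(|Z| / \log_{1/(1-p)}(n)\bigr)$, up to an additive error of order $\sqrt n / \log_{1/(1-p)}(n)$ coming from the final ``leftover'' vertices. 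The numerator changes from $\Theta(zp)$ in \refT{MainSmall} to $\Theta(z)$ here simply because when $p \to 1$ a random graph on $zp \approx z$ vertices is itself dense, so a greedy colouring uses roughly one colour per vertex — the right normalization is the independence number $\Theta\bigl(\log_{1/(1-p)}(n)\bigr)$ rather than $\Theta(\log n / p)$.

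First I would set $k := \Theta\bigl(\log_{1/(1-p)}(n)\bigr)$ and $m := \Theta\bigl(\sqrt n / \log_{1/(1-p)}(n)\bigr)$, chosen so that $km = \Theta(\sqrt n)$ and so that a subgraph on $m$ vertices still has enough vertices to force an independent set of size $k$. Then I would run the same two-phase greedy colouring on each $Z$ with $|Z| \le z$: repeatedly strip off an independent set of size $k$ (assigning each one a fresh colour), until at most $m$ vertices remain, and colour those by a second greedy pass. The first phase uses at most $|Z|/k = O\bigl(z / \log_{1/(1-p)}(n)\bigr)$ colours; the second phase must be controlled using the structural fact that all large subgraphs of $\Gnp$ contain a low-degree vertex (the Grimmett--McDiarmid-style input already invoked in the discussion before \refT{MainLarge}), which lets the greedy second pass finish with $O(m)$ additional colours. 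Combining, $\chi(\Gnp[Z]) = O\bigl(z / \log_{1/(1-p)}(n)\bigr)$ uniformly over all such $Z$, and plugging into~\eqref{eq:basic:bound} gives concentration in an interval of length $O\bigl(\omega\sqrt n / \log_{1/(1-p)}(n)\bigr)$; rescaling $\omega$ absorbs the constant into the stated $\floor{C\omega\sqrt n / \log_{1/(1-p)}(n)}$.

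The two probabilistic lemmas I need are: (a) with high probability every subset of $\ge m$ vertices of $\Gnp$ spans a vertex of degree $\le (1 - \gamma')\cdot(\text{subset size})$ for a suitable $\gamma' > 0$ depending on $\gamma$ — this is a Chernoff bound plus a union bound over subsets, and since $p$ is bounded away from $0$ the deviation probabilities are exponentially small enough to beat the $2^n$ subsets; and (b) the resulting greedy procedure indeed extracts independent sets of size $k = \Theta\bigl(\log_{1/(1-p)} n\bigr)$ from any $m$-vertex subset — this follows because iterating (a) on the complement peels off a low-co-degree vertex each time, and the base-$(1-p)^{-1}$ logarithm is exactly the number of halving-type steps before the candidate set is exhausted. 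I would prove both for the full range $\gamma \le p < 1$ at once, being careful that when $p$ is very close to $1$ the quantity $\log_{1/(1-p)}(n) = \log n / \log(1/(1-p))$ can be as small as $O(1)$, in which case the claimed interval length is $\Theta(\omega\sqrt n)$ and the bound degenerates gracefully to Shamir--Spencer — so no contradiction arises and the argument remains valid.

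\textbf{Main obstacle.} The delicate point is the uniformity over \emph{all} $Z$ with $|Z| \le z$ combined with the correct $p$-dependence of $k$: one must verify that the Chernoff-plus-union-bound estimate in (a) survives with error probability $o(1)$ even for the smallest relevant subset size $m = \Theta\bigl(\sqrt n / \log_{1/(1-p)}(n)\bigr)$, which shrinks as $p \to 1$; since $\log_{1/(1-p)} n$ can be bounded, $m$ stays polynomial in $n$ and the union bound over $\binom{n}{m} \le 2^n$ subsets is fine, but the bookkeeping of constants (ensuring $C = C(\gamma)$ and nothing worse) across the whole range $\gamma \le p < 1$, and in particular near $p \to 1$, is where the real care is needed.
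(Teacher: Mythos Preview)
Your overall strategy matches the paper's: reduce via \refL{workhorse} to bounding $\chi(\Gnp[Z])$ uniformly over $|Z|\le z$, greedily extract independent sets using a Chernoff-verified minimum-degree property, and colour the leftover vertices trivially. But two parameter choices break the argument as you have stated it.

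First, your lemma~(a) asserts that every large subset contains a vertex of degree at most $(1-\gamma')|S|$ for a \emph{constant} $\gamma'=\gamma'(\gamma)$. This is false once $q:=1-p<\gamma'$: the expected degree in $\Gnp[S]$ is $(1-q)(|S|-1)$, so when $p\to 1$ no fixed $\gamma'>0$ survives. The paper instead takes the slack to be $q/2$ (setting $d=1-q/2$), and it is precisely this $p$-dependent slack that makes the greedy peel off a $\Theta(q)$-fraction of candidates per step and hence produce independent sets of size $\Theta\bigl(\log n/\log(1/q)\bigr)=\Theta\bigl(\log_{1/(1-p)}n\bigr)$ rather than $\Theta(\log n)$. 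Second, you use the same $m$ as both the colouring stopping threshold and the lower size bound in~(a); but to \emph{iterate} the low-degree step inside an $m$-vertex set, the degree property must hold all the way down to a much smaller threshold~$u$ (else after one step the remaining set has fewer than $m$ vertices and~(a) no longer applies). The paper takes $u=\Theta(\log n/q)$, the smallest scale at which the Chernoff tail $e^{-\Theta(s^2 q)}$ beats $\binom{n}{s}$, together with a separate larger stopping threshold $m=n^{1/3}$; then $\log(m/u)=\Theta(\log n)$ gives $k=\Theta(\log_{1/(1-p)}n)$, and the $\le m$ leftover vertices are simply given one colour each (no second greedy pass is needed, since $m\ll z/k$). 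For $q\le n^{-1/9}$ the threshold $u$ would exceed $m$ and this scheme collapses, but there $\log_{1/(1-p)}n\le 9$ and the trivial bound $\chi(\Gnp[Z])\le|Z|\le z$ already yields the claim --- this is the precise version of your ``degenerates to Shamir--Spencer'' remark. (Minor point: your $m=\Theta(\sqrt n/\log_{1/(1-p)}n)$ \emph{grows} as $p\to 1$, it does not shrink.)
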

%
This concentration bound is meaningful, since typically~${\chi(\bGnp)=\Theta\bigpar{n/\log_{1/(1-p)}(n)}}$ in the assumed range of~$p$; see~\refS{largetypical}. 
For~$p=1-n^{-\Theta(1)}$ the bound of \refT{MainLarge} effectively reduces to the $\omega\sqrt{n}$ concentration bound of Shamir and Spencer,  
which in fact is best possible for~$p=1-\Theta(n^{-1})$ due to~\refT{alonkriv} and the earlier work of Alon and Krivelevich~\cite{AK}. 
Given any integer~$r \ge 2$, we similarly believe that no further concentration improvements are possible for~$p=1-\Theta(n^{-2/r})$; see~\refF{conjfig} and~\refConj{bigconj}.
In~\refS{ptooneregime} we discuss in more detail the behavior of the chromatic number~$\chi(\Gnp)$ when~${p=1-n^{-\Omega(1)}}$.

\subsection{Proofs of \refT{MainSmall} and~\ref{MainLarge}}\label{sec:proofs} 
As discussed, the starting point for our proofs of \refT{MainSmall} and~\ref{MainLarge} is the following useful observation about the concentration of~$\chi(\Gnp)$. 
The intuition behind \refL{workhorse} is that, after removing at most~$z=\omega\sqrt{n}$ vertices from~$\Gnp$, 
we can color the remaining vertices of~$\Gnp$ with about~$\Lambda$ colors, 
where the median~${\Lambda=\Lambda(n,p)}$ of~$\chi(\Gnp)$ does not depend on~$z$ or~$\omega$ 
(which is a non-standard feature: usually a different function~${\Lambda}$ is used that explicitly depends on~$z$ or~$\omega$);  
we defer the concentration-based proof to~\refApp{sec:app}. 
\begin{lemma}[Chromatic~number: concentration around~median]\label{workhorse}
For any~${p=p(n)\in [0,1]}$ and~${z=z(n,\omega)}$ as in~\eqref{z}, 
the following holds for the random graph~$\Gnp$. 
If there is a function~${\Gamma=\Gamma(z,n,p)}$ for which the event 
\begin{equation}\label{eq:workhorse:ass}
\max_{\sset \subseteq [n]: |\sset| \le z}\chi\bigpar{\bGnp[\sset]} \: \le \: \Gamma 
\end{equation}
holds with high probability, then with high probability we also have 
\begin{equation}\label{eq:workhorse}
\bigabs{\chi(\Gnp)-\Lambda} \: \le \: \Gamma,
\end{equation}
where~${\Lambda=\Lambda(n,p)}$ is defined as the smallest integer with~${\bP(\chi(\bGnp)\le \Lambda)\ge 1/2}$. 
\end{lemma}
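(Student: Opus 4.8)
The plan is to exploit the standard vertex-exposure martingale argument of Shamir and Spencer, together with the sub/super-martingale trick that converts a one-sided Azuma bound into a concentration statement around the median. First I would fix a large constant, absorb it later, and introduce the natural exposure filtration: order the vertices $1,\dots,n$ and let $\cF_i$ be the $\sigma$-algebra generated by all edges incident to $\{1,\dots,i\}$. For the function $f(G):=\chi(G)$ the key structural input is the vertex-Lipschitz property in the form dictated by~\eqref{eq:basic:bound}/\eqref{eq:workhorse:ass}: if $G,G'$ differ only in the edges at a set $S$ of vertices, then $|\chi(G)-\chi(G')|\le \chi(G[S'])$ for an appropriate induced subgraph; more precisely, revealing one extra block of vertices changes $\chi$ by at most the chromatic number of the induced graph on the revealed-but-uncontrolled vertices. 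I would phrase this as: the Doob martingale $X_i:=\E[\chi(\Gnp)\mid \cF_i]$ has increments controlled, on a high-probability event, by the quantity $\max_{|\sset|\le z}\chi(\Gnp[\sset])\le \Gamma$.

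The main step is then a two-sided median argument. For the upper tail, consider the stopping-time modification of the martingale (or equivalently work with $Y_i:=\E[\min(\chi(\Gnp),\Lambda+\Gamma)\mid\cF_i]$ to keep increments bounded everywhere): on the event that~\eqref{eq:workhorse:ass} holds, the exposure of each new vertex-block changes the conditional expectation by at most $O(\Gamma/\sqrt n)$ per exposed vertex, giving total ``variance'' $O(z\cdot(\Gamma/\sqrt n)^2)=O(\omega\Gamma^2)$ — wait, the cleaner route is the Shamir--Spencer packaging: one shows directly that if $A$ is any set with $\P(\chi(\Gnp)\in A)\ge 1/2$, then $\P(\chi(\Gnp)\ \text{within}\ \Gamma\ \text{of}\ A)$ is $1-o(1)$, by exposing vertices in groups of size $\lceil\sqrt n\rceil$ and using that deleting one group of $\lceil\sqrt n\rceil$ vertices (there are $\sqrt n$ many such groups, total $z=\omega\sqrt n$) decreases/increases $\chi$ by at most the chromatic number of the deleted induced subgraph, which is $\le\Gamma$ on the good event. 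Applying Azuma to the resulting bounded-difference martingale with $\sqrt n$ steps of size $O(\Gamma/\sqrt n)$... the point is that one gets concentration of $\chi(\Gnp)$ within $\omega^{1/2}\cdot O(\Gamma\cdot\text{something})$ of its median; I would instead run the argument so that the number of ``bad'' vertices we may need to delete to repair any configuration is exactly $\le z$, so that the discrepancy is exactly $\le\Gamma$ and the probabilistic cost is $\exp(-\Omega(\omega))=o(1)$.

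Concretely: let $\Lambda$ be the smallest integer with $\P(\chi\le\Lambda)\ge 1/2$. For the lower bound $\chi(\Gnp)\ge\Lambda-\Gamma$ w.h.p., suppose not: then with probability $\ge\eps$ we could delete $\le z$ vertices and drop below $\Lambda-\Gamma$... the standard move is: let $U$ be the (random) minimum number of vertices whose deletion makes $\chi\le \Lambda-\Gamma-1$; show $\E$-martingale of $\min(U,z+1)$ has bounded differences $1$ per exposed vertex, so $U$ is concentrated; since $\P(U=0)=\P(\chi\le\Lambda-\Gamma-1)$ and on the good event $U\le z$ forces $\chi\le\Lambda-1$ contradicting... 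I'd clean this up using the now-classical ``certificate'' formulation. The symmetric argument with $U':=$ minimum number of vertices whose deletion makes $\chi\ge\Lambda+\Gamma+1$ — but $\chi$ only decreases under deletion, so for the upper tail one instead bounds $\P(\chi\ge\Lambda+\Gamma+1)$ by noting that on $\{$\eqref{eq:workhorse:ass} holds$\}$, if $\chi(\Gnp)\ge\Lambda+\Gamma+1$ then... hmm, this direction needs the group-exposure martingale directly: expose vertices in $\sqrt n$ groups of $\sqrt n$, martingale $Z_j:=\E[\chi\mid\text{first } j\sqrt n\text{ vertices}]$ has $|Z_j-Z_{j-1}|\le\max_{|\sset|\le\sqrt n}\chi(\Gnp[\sset])\le\Gamma$ on the good event (truncate to make this hold always at cost $o(1)$), and $\sqrt n$ steps gives $\P(|Z_{\sqrt n}-Z_0|\ge t\Gamma\sqrt{\sqrt n})\le 2e^{-t^2/2}$; since $Z_0=\E\chi$ and $\Lambda$ is near $\E\chi$ within $O(\Gamma\sqrt n)$... this gives the weaker $\omega\sqrt n$-type bound, not the $\Gamma$ bound.

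\textbf{Where the real subtlety lies.} The genuinely careful point, which I would spend most effort on, is getting the discrepancy to be \emph{exactly} $\Gamma$ (not $\Gamma$ times a slowly growing factor): this is the known Shamir--Spencer refinement where one exposes vertices \emph{one at a time} and uses that the whole deficiency between $\chi$ of the full graph and $\chi$ restricted to a subset of $n-z$ ``safe'' vertices is at most $\chi$ of the $\le z$ deleted vertices — so the relevant martingale is not the vertex-exposure martingale of $\chi$ directly but of an auxiliary quantity (the number of vertices one must delete to reach a target value), which has increments $\le 1$, is therefore concentrated within $O(\sqrt{z})=O(\omega^{1/2}n^{1/4})$ of its mean by Azuma, and then one checks that its mean is $o(z)$ whenever the target is $\Lambda\pm\Gamma$, because otherwise the good event~\eqref{eq:workhorse:ass} would be contradicted. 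Balancing $\omega\to\infty$ makes $\exp(-\Omega(\omega))\to 0$, which is why $z=\omega\sqrt n$ with $\omega\to\infty$ (rather than a constant) is exactly what is needed. I would defer the full bookkeeping to the appendix as the authors indicate (``\refApp{sec:app}''), but the skeleton above — exposure filtration, auxiliary deletion-count with unit Lipschitz increments, Azuma, and matching the mean of that count against the hypothesis~\eqref{eq:workhorse:ass} — is the complete strategy, and the main obstacle is purely the careful definition of the auxiliary random variable so that its small mean is genuinely \emph{forced} by~\eqref{eq:workhorse:ass} rather than merely plausible.
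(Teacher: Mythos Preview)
Your proposal eventually identifies the right auxiliary object---the minimum number of vertices one must delete to bring $\chi$ below a target, which is $1$-Lipschitz under single-vertex exposure---and your lower-tail sketch with target $\Lambda-\Gamma-1$ is essentially correct. But the upper tail is a genuine gap: your $U'$ is, as you yourself note, nonsensical (deletion only lowers $\chi$), and your fallback to group exposure gives only an $O(\Gamma n^{1/4})$-type deviation, not $\Gamma$. Your closing summary is also off on two counts: the targets ``$\Lambda\pm\Gamma$'' are wrong (for the lower tail you need a target strictly below $\Lambda-\Gamma$, as you had earlier; for the upper tail the correct target is $\Lambda$ itself), and the smallness of the mean of the deletion count is \emph{not} forced by~\eqref{eq:workhorse:ass}---that hypothesis only enters at the very end, to convert ``deleted at most $z$ vertices'' into ``$\chi$ changed by at most $\Gamma$''.

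The paper's proof handles both tails with one clean device. Define $\lambda$ as the $1/\omega$-quantile (the smallest integer with $\P(\chi(\Gnp)\le\lambda)\ge 1/\omega$) rather than the median, and let $Y$ be the minimum number of vertices whose deletion makes $\chi\le\lambda$. Then $\P(Y=0)\ge 1/\omega$, and the bounded-differences inequality applied \emph{in reverse} yields $\E Y\le\sqrt{2n\ln\omega}\ll z$; a second, forward application gives $\P(Y\ge z)\le 1/\omega$. On $\{Y<z\}$ intersected with the event~\eqref{eq:workhorse:ass} one has $\chi\le\lambda+\Gamma$, while $\P(\chi<\lambda)<1/\omega$ by the definition of $\lambda$. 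Since the interval $[\lambda,\lambda+\Gamma]$ thus has probability $1-o(1)\ge 1/2$ for large $n$, the median satisfies $\lambda\le\Lambda\le\lambda+\Gamma$ deterministically, and~\eqref{eq:workhorse} follows. (Minor: the concentration scale of $Y$ is $O(\sqrt n)$, not $O(\sqrt z)$---there are $n$ exposure steps, each with increment at most one.)
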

%
To show concentration of~$\chi(\bGnp)$, it thus suffices to show that we can color any induced subgraph~$\bGnp[\sset]$ on~$|Z| \le z$ vertices with few colors. 
To efficiently color~$\bGnp[\sset]$ we exploit local sparsity by combining standard Chernoff bounds with the following two greedy-based results, 
whose routine proofs we defer to \refApp{sec:app}. 
In brief, \refL{greedy} enables us to iteratively remove a large independent set from~$Z$ and assign its vertices one new color, until few vertices of~$Z$ remain, 
which we then color using \refL{Degeneracy}. 
Below we write~$\delta(G)$ and~$\alpha(G)$ for the minimum degree of~$G$ and the size of the largest independent set of~$G$, respectively. 
\begin{lemma}[Large independent~sets: greedy~bound]\label{greedy}%
Given a graph~$G$ and parameters~$0 < d < 1 < u$, assume that the minimum degree satisfies~$\delta(G[S]) \le d(|S|-1)$ for all vertex-subsets~$S\subseteq V(G)$ of size~$|S| \ge u$. 
Then~$\alpha(G[W]) \ge -\log_{(1-d)(1-1/u)} \bigpar{|W|/u}$ for any vertex-subset~$W \subseteq V(G)$ of size~$|W|\ge u$. 
\end{lemma}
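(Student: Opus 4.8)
The plan is to analyze the natural greedy algorithm that repeatedly deletes a minimum-degree vertex together with all of its neighbours, and to track how quickly the surviving vertex set shrinks. Concretely, I would set $W_0 := W$, and as long as $|W_i| \ge u$ use the hypothesis to pick a vertex $v_i \in W_i$ with $\deg_{G[W_i]}(v_i) = \delta(G[W_i]) \le d(|W_i|-1)$, then set $W_{i+1} := W_i \setminus \bigl(\{v_i\} \cup N_{G[W_i]}(v_i)\bigr)$; since $|W_i|$ strictly decreases (each step deletes at least $v_i$), this terminates at some first step $t \ge 1$ with $|W_t| < u$. I would first check that $I := \{v_0, \dots, v_{t-1}\}$ is an independent set of $G[W]$ of size exactly $t$: the $v_i$ are distinct because $v_i \notin W_{i+1} \supseteq \{v_{i+1}, \dots, v_{t-1}\}$, and for $j < i$ the set $W_{j+1}$, and hence $W_i \ni v_i$, contains no neighbour of $v_j$, so $v_iv_j \notin E(G)$.

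Next I would bound the shrinkage rate. For every $i < t$ we have $|W_i| \ge u$, hence $|W_i| - 1 \ge (1-1/u)|W_i|$, and therefore
\[
|W_{i+1}| \ge |W_i| - 1 - d(|W_i|-1) = (1-d)(|W_i|-1) \ge (1-d)(1-1/u)\,|W_i| = q\,|W_i|,
\]
where $q := (1-d)(1-1/u) \in (0,1)$ (using $0<d<1<u$). Iterating gives $|W_i| \ge q^{\,i}|W|$ for all $0 \le i \le t$, and in particular $q^{\,t}|W| \le |W_t| < u$. Taking $\log_q$, which reverses inequalities since $q < 1$, yields $t > \log_q(u/|W|) = -\log_q(|W|/u)$, so that
\[
\alpha(G[W]) \ge |I| = t \ge -\log_q(|W|/u) = -\log_{(1-d)(1-1/u)}(|W|/u),
\]
as claimed (in the degenerate case $|W| = u$ the right-hand side is $0$, and the bound reads $t \ge 1$, which also holds since step $0$ is always performed).

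I do not expect a serious obstacle here, as this is essentially a textbook greedy analysis; the only two points that need a little care are the verification that the greedily chosen vertices are pairwise non-adjacent, and the elementary inequality $|W_i| - 1 \ge (1-1/u)|W_i|$ (valid precisely because $|W_i| \ge u$), which is exactly what lets us pass from the crude estimate $(1-d)(|W_i|-1)$ to the clean multiplicative recursion $|W_{i+1}| \ge q\,|W_i|$ that produces the base-$(1-d)(1-1/u)$ logarithm appearing in the statement.
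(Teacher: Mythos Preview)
Your proof is correct and follows essentially the same greedy argument as the paper: both iteratively remove a minimum-degree vertex and its neighbourhood, use the hypothesis to obtain the multiplicative recursion $|W_{i+1}|\ge (1-d)(1-1/u)|W_i|$, and read off the logarithmic lower bound on the number of steps. The only cosmetic difference is that the paper runs the process until the surviving set is empty, whereas you stop as soon as $|W_t|<u$; both yield the same bound, and your explicit verification that the chosen vertices are pairwise non-adjacent is a nice touch the paper leaves implicit.
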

\begin{lemma}[Chromatic~number: greedy~bound]\label{Degeneracy}%
Given a graph~$G$ and a parameter~$r \ge 0$, assume that the minimum degree satisfies~$\delta(G[S]) \le r$ for all vertex-subsets~~$S\subseteq V(G)$. 
Then~${\chi(G) \le r+1}$. 
\end{lemma}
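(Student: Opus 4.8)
The statement is the classical fact that an $r$-degenerate graph is $(r+1)$-colorable, so I expect the proof to be short and essentially obstacle-free; the only minor points are a reduction to integer~$r$ and handling the empty base case. First I would note that we may assume~$r\in\{0,1,2,\dots\}$: replacing~$r$ by~$\lfloor r\rfloor$ leaves the hypothesis unchanged (vertex-degrees are integers, so~$\delta(G[S])\le r$ iff~$\delta(G[S])\le\lfloor r\rfloor$) and leaves the conclusion unchanged (since~$\chi(G)\in\bZ$, we have~$\chi(G)\le r+1$ iff~$\chi(G)\le\lfloor r\rfloor+1$).

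The plan is then to induct on the number of vertices~$\abs{V(G)}$. The case~$\abs{V(G)}=0$ is trivial since~$\chi(G)=0\le r+1$. For the inductive step, applying the hypothesis with~$S=V(G)$ produces a vertex~$v\in V(G)$ with~$\deg_G(v)=\delta(G)\le r$. The deleted graph~$G-v$ still satisfies the hypothesis of \refL{Degeneracy}, because any vertex-subset~$S\subseteq V(G-v)$ is also a subset of~$V(G)$ with~$\delta\bigpar{(G-v)[S]}=\delta(G[S])\le r$; hence by the inductive hypothesis~$G-v$ admits a proper coloring~$c\colon V(G-v)\to\{1,\dots,r+1\}$. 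As~$v$ has at most~$r$ neighbors in~$G$, at most~$r$ of the colors~$\{1,\dots,r+1\}$ occur on the neighborhood of~$v$, so some color~$j\in\{1,\dots,r+1\}$ is missing there; extending~$c$ by~$c(v):=j$ yields a proper~$(r+1)$-coloring of~$G$, completing the induction and the proof.

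An equivalent way to organize the same argument, which also explains why it dovetails with the greedy viewpoint of \refL{greedy}, is to unfold the recursion: repeatedly remove a minimum-degree vertex to obtain an ordering~$v_1,\dots,v_m$ of~$V(G)$ in which each~$v_i$ has at most~$r$ neighbors among~$v_1,\dots,v_{i-1}$, and then color the vertices in the order~$v_1,v_2,\dots,v_m$, always assigning~$v_i$ some color in~$\{1,\dots,r+1\}$ not already used on an earlier neighbor (such a color exists since there are at most~$r$ such neighbors). The resulting coloring uses at most~$r+1$ colors and is proper by construction. I do not anticipate any real difficulty here, since both forms of the argument are entirely standard.
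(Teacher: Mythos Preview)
Your proof is correct and takes essentially the same approach as the paper's: induction on~$|V(G)|$, removing a minimum-degree vertex, coloring the rest by the inductive hypothesis, and assigning~$v$ a color avoided by its at most~$r$ neighbors. The only cosmetic differences are your base case~$|V(G)|=0$ versus the paper's~$|V(G)|=1$, and your preliminary reduction to integer~$r$ (which the paper omits); the alternative greedy-ordering formulation you give is also standard and equivalent.
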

%
In the remainder of this section we prove the concentration bounds of Theorems~\ref{MainSmall}--\ref{MainLarge} 
by combining \refL{workhorse} with variants of the above-discussed two-phase greedy coloring argument for~$\bGnp[\sset]$.

\subsubsection{Proof of \refT{MainSmall} when $zp\gg \log n$}\label{sec:zplarge}
In the case~$zp\gg \log n$ of \refT{MainSmall}, our proof strategy uses Chernoff bounds to color~$\bGnp[\sset]$ as follows. 
By iteratively applying \refL{greedy} with $u=\Theta(\log n/p)$ and $d\approx p\gg 1/u$ to extract large independent sets from~$\bGnp[\sset]$, 
in the first phase the idea is to color all but~$O(u)$ many vertices of~$Z$ using~$O\bigpar{zp/\log ({zp}/\log n)}$ colors, 
see~\eqref{eq:chiZR}~below for the details (which take into account that the largest independent sets get smaller as fewer vertices of~$Z$ remain). 
By applying~\refL{Degeneracy} with~$r=\Theta(up)$, in the second phase then idea is to then 
color the remaining~$O(u)$ uncolored vertices using only~$r+1=\Theta(\log n) \ll zp/\log ({zp}/\log n)$ additional~colors. 
%
\begin{proof}[Proof of Theorem \ref{MainSmall} when~$zp\gg \log n$]
Set $\eps:=(1-\gamma)/(1+\gamma)$, $d:=(1+\eps)p$ as well as~$u := 24 \eps^{-2} (\log n)/p$ and~$r=\ceil{2e^2up}$. 
Note that~$d < 1 \ll u$. 
With an eye on the minimum degree based Lemmas~\ref{greedy}--\ref{Degeneracy}, 
let~$\cD$ denote the event that~$\delta(\Gnp[S]) \le d(|S|-1)$ for all~$S\subseteq [n]$ with~$|S| \ge u$, 
and let~$\cE$ denote the event that~$\delta(\Gnp[S]) \le 2r-1$ for all~$S\subseteq [n]$ with $|S|\le 4 u$. 
Note that $\neg\cD$ implies existence of~$S\subseteq [n]$ with~$|S| \ge u$ such that~$\Gnp[S]$ has at least~$(1+\eps)p\tbinom{|S|}{2}$ edges.
Using standard Chernoff bounds (such as~\cite[Theorem~2.1]{JLR}) and~$\eps^2up/12=2\log n$, a routine union bound argument shows that 
\begin{equation}\label{eq:chernoff1}
\Pr(\neg \cD ) \le \sum_{u \le s \le n} \binom{n}{s} e^{-\eps^2 \binom{s}{2}p/3}
\le \sum_{u \le s \le n} \Bigpar{n e^{-\eps^2 up/12}}^s
\le \sum_{s\ge u} n^{-s} = o(1) .
\end{equation}
Exploiting~$r \ge 2e^2up > 2\log n$, we similarly see that 
\begin{equation}\label{eq:density1}
\Pr(\neg \cE ) \le \sum_{1 \le s \le 4u} \binom{n}{s}\binom{\binom{s}{2}}{sr}p^{sr}
\le \sum_{1 \le s \le 4u} \Biggsqpar{n \biggpar{\frac{4u pe}{2r}}^{r}}^s \le \sum_{s \ge 1} \Bigpar{n e^{-r}}^s\le \sum_{s \ge 1} n^{-s} = o(1) .
\end{equation}

We henceforth assume that the events~$\cD$ and~$\cE$ both hold.
Consequently, if a vertex subset~$W\subseteq [n]$ has size at least~${|W| \ge 2u}$, 
then using \refL{greedy} as well as~$1/u \ll d = (1+\eps)p \le (1+\eps)\gamma$ and~$-\log(1-d)\le d/(1-d)$ it follows~that 
the induced subgraph~$\Gnp[W]$ contains an independent set of size at least 
\begin{equation}\label{eq:largeindependent}
\alpha\bigpar{\Gnp[W]} 
\ge -\log_{(1-d)(1-1/u)}\bigpar{|W|/u}\ge \frac{\log\bigpar{|W|/u}}{-2\log\bigpar{1-d}} 
\ge \frac{\log\bigpar{|W|/u}}{A p} =: I\bigpar{|W|} 
\end{equation}
for a suitable constant~$A=A(\eps,\gamma) \in (0,\infty)$. 
In order to color any vertex subset~$\sset\subseteq [n]$ of size at most~$|\sset| \le z$, 
among the so-far uncolored vertices of~$Z$ 
we iteratively choose a largest independent set and assign its vertices one new color, 
until a set~$R \subseteq Z$ of at most~$|R| \le 4u$ uncolored vertices remains.
Applying \refL{Degeneracy} to~$\Gnp[R]$, 
in view of the event~$\cE$ and~$zp/\log n \gg 1$ 
we then color~$R$ using at~most 
\begin{equation}
\label{eq:chiR}
\chi\bigpar{\Gnp[R]} \: \le \: 2 r = \Theta\bigpar{\eps^{-2} \log n} \ll \frac{zp}{\log ({zp}/{\log n})} 
\end{equation}
many colors. 
To bound the number of colors used for~$Z\setminus R$, 
for a fixed integer~$i \ge 0$ we shall first bound the number of independent sets chosen while the number of uncolored vertices from~$Z$ is between~$z2^{-(i+1)}$ and~$z2^{-i}$, 
and then sum over all feasible integers~$i \ge 0$. 
Namely, after recalling~$u=\Theta(\eps^{-2} (\log n)/p)$ and~$zp \gg \log n$, 
it follows in view of~\eqref{eq:largeindependent} that the procedure colors~$Z \setminus R$ using at~most  
\begin{equation}
\label{eq:chiZR}
\chi\bigpar{\Gnp[Z \setminus R]} 
\: \le 
\sum_{i\ge 0: z2^{-i} \ge 4 u}\frac{z2^{-i}}{I\bigpar{z2^{-(i+1)}}}
\le \frac{A zp}{\log(z/u)} \sum_{i\ge 0}\frac{(i+2)}{2^i} \le \frac{O(A zp)}{\log ({zp}/{\log n})} 
\end{equation}
many colors, where the second inequality exploits~$z2^{-i} \ge 4 u$ to infer~${\log(z2^{-(i+1)}/u)}\ge {\log(z/u)/(i+2)}$. 
Combining~\eqref{eq:chiR}--\eqref{eq:chiZR} with \refL{workhorse} and estimates~\eqref{eq:chernoff1}--\eqref{eq:density1} then implies~\eqref{ggcase} for suitable~$C=C(\gamma)>0$.  	
\end{proof}

\subsubsection{Proof of \refT{MainSmall} when~$zp\ll \log n$}\label{sec:zpsmall}
In the remaining case~$zp\ll \log n$ of \refT{MainSmall}, we can directly bound~$\chi(\bGnp[\sset])$ using~\refL{Degeneracy}. 
%
\begin{proof}[Proof of \refT{MainSmall} when~$zp\ll \log n$]
Set~$r := \bigceil{{4\log n}/{\log\bigl(\frac{\log n}{zp}\bigr)}}$ and~$C:=16$, say. 
Let~$\cE$ denote the event that~$\delta(\Gnp[S]) \le 2r-1$ for all~$S\subseteq [n]$ with~$|S| \le z$. 
Noting that~$zp\ll \log n$ implies~$zp/r \ll \sqrt{zp/\log n}$, similar to~\eqref{eq:density1} it follows via a standard union bound argument that 
\begin{align*}
\Pr(\neg \cE ) \le \sum_{1 \le s \le z} \binom{n}{s}\binom{\binom{s}{2}}{sr}p^{sr}
\le \sum_{1 \le s \le z} \Biggsqpar{n \biggpar{ \frac{zpe}{2r}}^{r}}^s 
\le \sum_{s\ge 1} \Biggsqpar{n \biggpar{ \frac{zp}{\log n}}^{r/2}}^s
\le \sum_{s\ge 1} n^{-s} = o(1) .
\end{align*}
We henceforth assume that the event~$\cE$ holds.
Applying~\refL{Degeneracy}, we then color any~$\sset\subseteq [n]$ with~$|\sset| \le z$ 
using at most~$\chi(\bGnp[\sset]) \le 2r$ many colors, 
which together with \refL{workhorse} implies~\eqref{llcase} for~$r \ge 2$ 
(in which case the ceiling in the definition of~$r$ causes no major rounding~issues).

In the remaining case~$r=1$ it is easy to see that $p \ll n^{-4}$ holds (with room to spare), so that $\Gnp$ has with high probability no edges.
Hence~$\chi(\Gnp)$ is concentrated on one value, which trivially establishes~\eqref{llcase}.
\end{proof}

\subsubsection{Proof of \refT{MainLarge}}\label{sec:MainLarge}
The proof strategy for \refT{MainLarge} is similar but simpler to \refS{sec:zplarge}.
Namely, by iteratively applying~\refL{greedy} to extract independent sets of size~$\Theta\bigpar{\log_{1/(1-p)}(n)}$ from~$\bGnp[\sset]$, 
we first color all but at most~$n^{1/3}$ many vertices of~$Z$ using~$O\bigpar{z/\log_{1/(1-p)}(n)}$ colors, 
and then trivially color the remaining uncolored vertices using at most~$n^{1/3} \ll z/\log_{1/(1-p)}(n)$ additional colors (by giving each vertex a new~color).  
\begin{proof}[Proof of Theorem \ref{MainLarge}]
Set~$q:=1-p$, $d:=1-q/2$ as well as~$u:=96(\log n)/q$ and~$m:=n^{1/3}$. 
If~$q \le n^{-1/9}$ then \refL{workhorse} implies the claimed bound in \refT{MainLarge} by noting that~$\chi(\bGnp[\sset]) \le |\sset| \le z \le 9 z\log(1/q)/\log n$, 
so we henceforth assume that~$q \ge n^{-1/9}$.
Let~$\cD$ denote the event that~$\delta(\Gnp[S]) \le d(|S|-1)$ for all~$S\subseteq [n]$ with~$|S| \ge u$.
Note that $\neg\cD$ implies existence of~$S\subseteq [n]$ with~$|S| \ge u$ such that~$\Gnp[S]$ has at most~$q/2 \cdot \tbinom{|S|}{2}$ non-edges.
Similarly to~\eqref{eq:chernoff1}, 
using~$1-p=q$ and~$uq/48 = 2 \log n$ it routinely follows that
\begin{equation}\label{density2}
\bP(\neg \cD)\le \sum_{u\le s\le n} \binom{n}{s}e^{-\binom{s}{2}q/12}\le \sum_{u\le s\le n}\Bigpar{n e^{-uq/48}}^s\le \sum_{s \ge u} n^{-s} =o(1).
\end{equation}

We henceforth assume that the event~$\cD$ holds.
Consequently, if a vertex subset~$W\subseteq [n]$ has size at least~${|W| \ge m}$, 
then using \refL{greedy} together with $m/u \gg n^{1/6}$ as well as~$1/u \ll p = 1-q$ and~$q \le 1-\gamma$ it follows that
the induced subgraph~$\Gnp[W]$ contains an independent set of size at least 
\begin{equation*}
\alpha\bigpar{\Gnp[W]} 
\ge -\log_{[q/2 \cdot (1-1/u)]}\bigpar{|W|/u}
\ge \frac{\log(m/u)}{\log(2/q^2)} 
\ge \frac{\log n}{A \log(1/q)}=:k 
\end{equation*}
for a suitable constant~$A=A(\gamma) \in (0,\infty)$. 
%
In order to color any vertex subset~$\sset\subseteq [n]$ of size at most~$|\sset| \le z$, 
among the so-far uncolored vertices of~$Z$ 
we iteratively choose an independent set of size at least~$k$ and assign its vertices one new color, 
until at most~$m$ uncolored vertices remain, 
and then use one new color for each remaining vertex. 
In view of~$|Z| \le z$ and~$m \ll z/k$ it follows that this procedure colors~$Z$ using at~most  
\begin{equation*}
\chi\bigpar{\Gnp[Z]} \: \le \: \frac{|Z|}{k}+m 
\le \frac{2z}{k} \le \frac{2A z}{\log_{1/q}(n)} 
\end{equation*}
many colors, which together with \refL{workhorse} and estimate~\eqref{density2} completes our proof with~$C:=\max\{4A,18\}$. 
\end{proof}

\section{Concentration of $\chi(G_{n,p})$: the very dense case}\label{ptooneregime} 
We conclude by discussing the behavior of the chromatic number~$\chi(\Gnp)$ in the very dense case~${1-p=n^{-\Omega(1)}}$. 
Here $\chi(\Gnp)$ is closely linked to the size~$\alpha=\alpha(\Gnp)$ of the largest independent set of~$\Gnp$.
Namely, inspired by~\cite{HR}, it seems plausible that a near-optimal coloring can be obtained by first picking as many vertex-disjoint independent sets of size~$\alpha$ as possible, 
and then covering (almost all of) 
the remaining vertices with independent sets of size~$\alpha-1$. 
More concretely, for~$n^{-2/r} \ll 1-p \ll n^{-2/(r+1)}$ we expect to have~about
\begin{equation}\label{eq:mur1}
\mu_{r+1}=\mu_{r+1}(n,p):=\binom{n}{r+1}(1-p)^{\binom{r+1}{2}}
\end{equation}
many vertices in independent sets of largest size~$\alpha=\alpha(\Gnp)=r+1$ (which in fact are mostly vertex-disjoint). 
Since~$\mu_{r+1} = o(n)$, the near-optimal coloring heuristic then suggests that~$\chi(\Gnp) \approx o(n)+n/r \approx n/r$, as we shall make rigorous in~\refS{largetypical}.
Furthermore, since in our coloring heuristic we pick almost all independent sets of size~$r+1$, whose number is well-known to fluctuate by about~$\sqrt{\mu_{r+1}}$ in~$\Gnp$ (see~\cite{ruc}), 
it then becomes plausible that~$\chi(\Gnp)$ should also vary by this amount, 
as formalized by the following~conjecture. 
\begin{conjecture}\label{bigconj}
If the edge-probability~$p=p(n)$ satisfies~$(\log n)^{1/\binom{r}{2}} n^{-2/r} \ll 1-p \le (1+o(1)) n^{-2/(r+1)}$ for some integer~$r \ge 1$, 
then the following holds, for any~$\eps>0$ and any function~$\omega=\omega(n)\to\infty$ as $n\to\infty$. 
\vspace{-0.125em}\begin{romenumerate}
\parskip 0em  \partopsep=0pt \parsep 0em 
\item\label{conj:upper}
There is an interval of length~${\bigl\lfloor \omega \sqrt{\mu_{r+1}}\bigr\rfloor}$ that contains $\chi(\Gnp)$ with high~probability.
\item\label{conj:lower}
No interval of length~${\bigl\lfloor c \sqrt{\mu_{r+1}}\bigr\rfloor}$ 
contains $\chi(\Gnp)$ with probability at least~$\eps+o(1)$, where~$c=c(\eps,r)>0$.  
\end{romenumerate}
\end{conjecture}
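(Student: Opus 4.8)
The plan is to reduce both parts of \refConj{bigconj} to a single combinatorial parameter and then to study that parameter directly. Let $Y_{r+1}$ denote the number of independent sets of size $r+1$ in $\Gnp$, and let $X^\ast$ denote the maximum number of \emph{pairwise disjoint} such sets, i.e.\ the matching number of the $(r+1)$-uniform hypergraph $\mathcal{H}$ whose edges are the $(r+1)$-element independent sets of $\Gnp$. In the assumed range $\mu_{r+2}=o(1)$ holds away from the upper endpoint, so standard first/second moment estimates give $\alpha(\Gnp)=r+1$, $\E Y_{r+1}=(1+o(1))\mu_{r+1}\to\infty$ and $\var(Y_{r+1})=(1+o(1))\mu_{r+1}$ (the near-threshold regime; cf.~\cite{ruc}). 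The first and main structural goal is the identity
\[
\chi(\Gnp)\;=\;\frac{n-X^\ast}{r}\;+\;o\bigl(\sqrt{\mu_{r+1}}\bigr)\qquad\text{with high probability.}
\]
The lower bound $\chi(\Gnp)\ge (n-X^\ast)/r-O(1)$ is elementary: in any proper colouring the colour classes of size $r+1$ are pairwise disjoint $(r+1)$-independent sets, hence at most $X^\ast$ in number, so $n=\sum_i|C_i|\le (r+1)X^\ast+r(\chi-X^\ast)$, up to an $O(1)$ correction from the $O(1)$ many size-$(r+2)$ classes possible near the endpoint. For the matching upper bound I would colour $\Gnp$ in two phases: first use a maximum packing of $X^\ast$ disjoint $(r+1)$-independent sets as colour classes, and then partition the remaining $n-(r+1)X^\ast=(1-o(1))n$ vertices into independent $r$-sets with $O(1)$ left over --- equivalently, a $K_r$-factor of the complement of the left-over graph. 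Here the hypothesis $(\log n)^{1/\binom{r}{2}}n^{-2/r}\ll 1-p$ is exactly the Johansson--Kahn--Vu threshold for a perfect $K_r$-factor in $G_{N,q}$ with $N=(1-o(1))n$, so we are precisely in the regime where such a factor exists; the one genuine subtlety is that the left-over vertex set is correlated with the coin flips, which I would handle by a two-round exposure (reserving a thin independent sprinkling for the factor) or by invoking a resilient version of the factor theorem.

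Granting this identity, \refConj{bigconj}\ref{conj:upper} reduces to showing that $X^\ast$ lies in an interval of length $O(\omega\sqrt{\mu_{r+1}})$ with high probability. One inequality is free, since $X^\ast\le Y_{r+1}\le \mu_{r+1}+\omega\sqrt{\mu_{r+1}}$ by Chebyshev. The complementary bound $X^\ast\ge \nu^\ast-O(\omega\sqrt{\mu_{r+1}})$, where $\nu^\ast$ is the typical value of $X^\ast$, is immediate when $\mu_{r+1}=o(\sqrt n)$: then the expected number of intersecting pairs of $(r+1)$-independent sets is $O(\mu_{r+1}^2/n)=o(1)$, so with high probability all $Y_{r+1}$ of them are pairwise disjoint, $X^\ast=Y_{r+1}$, and the second moment method finishes. \textbf{The main obstacle is the complementary range} $\sqrt n\lesssim\mu_{r+1}\le\Theta(n)$, where $\mathcal{H}$ has $\Theta(n)$ intersecting pairs and $X^\ast$ is a genuine hypergraph matching number: the second moment method fails, and the vertex-exposure Azuma bound only yields the Shamir--Spencer window $\omega\sqrt n$, so one needs an essentially optimal-width concentration inequality for the matching number of the pseudorandom hypergraph of maximal independent sets. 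Obtaining such a sharp upper bound --- with window of the exact order $\sqrt{\mu_{r+1}}$ rather than $\omega\sqrt n$ or an Alon-type $\omega\sqrt n/\log n$ --- is open even for constant $p$, and I expect this step to require a genuinely new concentration technique: for instance an exposure martingale engineered so that at most $O(\mu_{r+1})$ of its $n$ increments are nonzero, or a transportation/coupling argument exploiting that $X^\ast$ depends essentially only on the $\approx\mu_{r+1}$ "sparse spots" of $\Gnp$.

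For \refConj{bigconj}\ref{conj:lower} the identity turns the task into anti-concentration of $X^\ast$, the $o(\sqrt{\mu_{r+1}})$ error being harmless because it can be taken uniformly small. When $\mu_{r+1}=o(\sqrt n)$ this is again transparent: $X^\ast=Y_{r+1}$ is, up to lower order, the number of copies of $K_{r+1}$ in $G_{n,q}$, which is asymptotically Poisson when $\mu_{r+1}=O(1)$ and otherwise obeys a local central limit theorem with standard deviation $\asymp\sqrt{\mu_{r+1}}$ (cf.~\cite{ruc}), so its mass in any interval of length $c\sqrt{\mu_{r+1}}$ is at most $\eps/2+o(1)$ for suitable $c=c(\eps,r)$. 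In the denser range I would argue along the lines of the non-concentration proofs of Heckel and Heckel--Riordan and of our \refT{surprise}\ref{eq:surprise:i}: expose $G_{n,q}$ in two rounds, condition on the first, and show that resampling a sparse second round shifts $X^\ast$ by $\pm\Omega(\sqrt{\mu_{r+1}})$ with probability bounded away from $0$ while the two configurations couple to agree off a negligible edge set, forcing the law of $X^\ast$ to be spread over a window of width $\Omega(\sqrt{\mu_{r+1}})$. The obstacle here is milder than in part (i) but still real: one must control the second-round shift in $X^\ast$ and the left-over term of the two-phase colouring simultaneously, and tightly enough that the anti-concentration of $X^\ast$ is not washed out.
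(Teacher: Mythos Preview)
This statement is a \emph{conjecture} in the paper, not a theorem: the paper does not contain a proof of it in full generality, so there is no ``paper's own proof'' to compare against. What the paper does establish is (a) the full conjecture for $r=1$ (\refT{alonkriv}), and (b) a weak form of part~\ref{conj:upper} for $r\ge 2$ with window $\omega\mu_{r+1}$ in place of $\omega\sqrt{\mu_{r+1}}$ (\refL{polylogconc}). The paper explicitly says that reaching $\omega\sqrt{\mu_{r+1}}$ ``seems to require additional ideas.''

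Your proposal is not wrong so much as it is an honest research program with correctly identified gaps. For $r=1$ your reduction to the matching number $X^\ast$ in the complement is exactly the paper's approach in \refT{alonkriv}: the paper writes $\chi(\Gnp)=n-M-o(n\sqrt{q})$ via the sandwich~\eqref{tightineq}, then gets concentration of $M$ from Talagrand and anti-concentration of $M$ from fluctuations of isolated edges (\refL{isoledges}). Your two-phase colouring and your anti-concentration sketch specialise to precisely this argument when $r=1$.

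For $r\ge 2$, however, you go well beyond what the paper proves. The paper's \refL{polylogconc} does not attempt to control $X^\ast$ at all: it simply uses $\chi\ge (n-Y)/r$ with $Y=\sum_{i>r}iX_i$ and Markov on $Y$, together with the Johansson--Kahn--Vu $K_r$-factor for the upper bound, to trap $\chi$ in a window of length $O(\mu_{r+1})$. You instead aim for the conjectured $\sqrt{\mu_{r+1}}$ window via the identity $\chi=(n-X^\ast)/r+o(\sqrt{\mu_{r+1}})$, and you yourself flag the two genuine obstructions: (1) the $K_r$-factor on the leftover vertex set is correlated with the exposed edges, which is a real issue (a naive appeal to JKV is not justified), and (2) concentration and anti-concentration of the hypergraph matching number $X^\ast$ at width $\sqrt{\mu_{r+1}}$ in the range $\mu_{r+1}\gtrsim\sqrt{n}$ is not available by any standard inequality. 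These are exactly the reasons the statement remains a conjecture; your proposal is a sensible outline of where the difficulty lies rather than a proof.
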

Note that~$\sqrt{\mu_{r+1}}$ increases from~$\Theta(1)$ to~$\Theta(\sqrt{n})$ as~$1-p$ increases from~$n^{-2/r}$ to~$n^{-2/(r+1)}$, 
so \refConj{bigconj} predicts that the concentration interval length of~$\chi(\Gnp)$ is neither monotone nor smooth in~$p$. 
Furthermore, \refConj{bigconj}\ref{conj:upper}  proposes a refinement of the well-known $\omega \sqrt{n}$ concentration bound of Shamir and Spencer~\cite{SS}. 
More interestingly, by varying~$r \ge 1$, \refConj{bigconj} also predicts that the concentration interval length changes infinitely many times from~$n^{1/2+o(1)}$ to $(\log n)^{\Theta(1)}$ as~$1-p$ decreases from~$n^{-o(1)}$ to~$n^{-2+o(1)}$; see also \refF{fig:conj}. 
This intriguing behavior differs conceptually from very recent predictions~\cite{HR} for constant~$p \in (0,1)$.

The remainder of this section is organized as follows. 
In \refS{small} we prove \refConj{bigconj} for~$r=1$, and in \refS{large} we establish \refT{surprise}, 
i.e., show that the concentration interval length of $\chi(\Gnp)$ undergoes a polynomial `jump' around~$1-p=n^{-1+o(1)}$. 
In \refS{largetypical} we then prove that, under the assumptions of \refConj{bigconj}, the chromatic number is typically~$\chi(\Gnp) =(1+o(1)) n/r$, as predicted~above.

\subsection{Concentration result: proof of \refConj{bigconj} for~$r=1$}\label{small}
The following result verifies the case~$r=1$ of \refConj{bigconj}:  
it states that~$\chi(\bGnp)$  is concentrated on an interval of length about~$\sqrt{n} \cdot \sqrt{n(1-p)}$ when~$n^{-2} \ll 1-p=O(1/n)$.  
In this range of~$p$, \refT{alonkriv}\ref{thm:alonkriv:conc} refines the well-known $\omega \sqrt{n}$ concentration bound of Shamir and Spencer~\cite{SS}. 
\refT{alonkriv}\ref{thm:alonkriv:nonconc} extends observations of Alon and Krivelevich~\cite{AK} and Bollob\'{a}s~\cite{B2004} for~$p=1-1/(10n)$ and~$n^{-2} \ll 1-p \ll n^{-3/2}$, respectively. 
\begin{theorem}\label{alonkriv}
If~$p=p(n)$ satisfies~$n^{-2}\ll 1-p \le (D+o(1))/n$ for some constant~${D \in (0,\infty)}$, 
then for~${q:=1-p}$ the following holds, for any~$\eps>0$ and any function~$\omega=\omega(n)\to\infty$ as $n\to\infty$. 
\vspace{-0.125em}\begin{romenumerate}
\parskip 0em  \partopsep=0pt \parsep 0em 
\item\label{thm:alonkriv:conc} 
There is an interval of length~${\bigl\lfloor \omega n\sqrt{q} \bigr\rfloor}$ that contains $\chi(\Gnp)$ with high~probability. 
\item\label{thm:alonkriv:nonconc} 
No interval of length~${\bigl\lfloor cn\sqrt{q}\bigr\rfloor}$ 
contains $\chi(\Gnp)$ with probability at least~$\eps+o(1)$, where~$c=c(\eps,D)>0$.  
\end{romenumerate}
\end{theorem}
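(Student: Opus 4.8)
The plan is to exploit the fact that when $q=1-p=O(1/n)$, the random graph $\Gnp$ is essentially a sparse random graph: the complement $\overline{\Gnp} \sim G_{n,q}$ has edge-probability $q=\Theta(1/n)$, so $\overline{\Gnp}$ is (near) the critical Erd\H{o}s--R\'enyi regime, where its component structure is well understood. Since $\chi(\Gnp)$ is the clique cover number of $\overline{\Gnp}$, i.e.\ the minimum number of cliques of $\overline{\Gnp}$ needed to cover $[n]$, and the cliques of $\overline{\Gnp}$ are exactly the independent sets of $\Gnp$, I would phrase everything in terms of $H:=\overline{\Gnp}\sim G_{n,q}$. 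In the stated range $n^{-2}\ll q\le (D+o(1))/n$, whp the largest clique in $H$ has size $2$ (triangles appear only when $q\gg n^{-2/3}$), so a clique cover of $[n]$ is just a matching of $H$ together with singletons: if $H$ has a matching covering $\nu$ vertices using $M$ edges (so $\nu=2M$), then $\chi(\Gnp)=n-M$ (pair up matched vertices, singleton the rest), provided $H$ is triangle-free. Hence \emph{whp} $\chi(\Gnp)=n-\nu(H)$, where $\nu(H)$ is the maximum matching size of $G_{n,q}$.

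So the whole theorem reduces to understanding the concentration and anti-concentration of the maximum matching number $\nu(G_{n,q})$ with $q=\Theta(1/n)$. For part~\ref{thm:alonkriv:conc} (concentration), I would argue that $\nu(H)$ differs from the number of edges $e(H)$ by a lower-order error: since $H$ whp has only $O(1)$ vertices in components that are not isolated vertices, single edges, paths or small trees — more precisely the ``deficiency'' $n-2\nu(H)$ equals the number of odd components plus a correction, and by the Erd\H{o}s--Gallai / Tutte--Berge formula one can show $|e(H)-\nu(H)|$ is small relative to $\omega n\sqrt q$. The key point is that $e(H)\sim\mathrm{Bin}(\binom n2,q)$ has standard deviation $\Theta(\sqrt{n^2 q})=\Theta(n\sqrt q)$, and a Chernoff/Bernstein bound gives that $e(H)$ — hence $\nu(H)$ up to the small matching-deficiency error — lies in a window of length $\lfloor \omega n\sqrt q\rfloor$ whp; combined with $\chi(\Gnp)=n-\nu(H)$ this gives~\ref{thm:alonkriv:conc}. (One should double-check the regime $q\ll 1/n$, where $H$ is subcritical and essentially a forest, so $\nu(H)=e(H)-O(1)$ whp, making the reduction cleaner.)

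For part~\ref{thm:alonkriv:nonconc} (anti-concentration), the plan is a two-point / local-central-limit argument: $e(G_{n,q})\sim\mathrm{Bin}(N,q)$ with $N=\binom n2$ and mean $Nq=\Theta(n)\to\infty$, so by a local limit theorem for the binomial, $e(G_{n,q})$ is spread out over an interval of length $\Theta(\sqrt{Nq})=\Theta(n\sqrt q)$ with no atom carrying more than $O(1/(n\sqrt q))$ mass, hence no interval of length $\lfloor c\, n\sqrt q\rfloor$ captures $e(G_{n,q})$ with probability exceeding, say, $2c$ for $n$ large; choosing $c=c(\eps,D)$ small enough makes this probability at most $\eps$. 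To transfer this to $\chi(\Gnp)=n-\nu(H)$ one again uses that $\nu(H)$ and $e(H)$ differ by a lower-order term whp, so an interval of length $\lfloor c n\sqrt q\rfloor$ containing $\chi(\Gnp)$ would force $e(H)$ into a slightly enlarged interval of comparable length, contradicting the binomial anti-concentration for suitably small $c$. The main obstacle is controlling the matching-deficiency error $n-2\nu(H)-(\text{number of isolated vertices})$ in the near-critical window $q=\Theta(1/n)$ — one needs that this error is $o(n\sqrt q)$ whp, which follows from the fact that in $G_{n,c/n}$ the number of vertices outside giant-or-tree-like pieces is $O_{\mathbb{P}}(1)$ (or at worst $n^{o(1)}$), but making this precise, uniformly for $D$ possibly large relative to $1$, is the delicate step; everything else is a routine combination of Tutte--Berge, Chernoff bounds, and a binomial local limit theorem.
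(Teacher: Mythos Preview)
Your reduction to the maximum matching~$M=\nu(H)$ of the complement~$H\sim G_{n,q}$ is the right start, and matches the paper's identity~$n-M-Y\le \chi(G_{n,p})\le n-M$ with~$Y=\sum_{i\ge 3}iX_i$. (A small slip: the triangle threshold in~$G_{n,q}$ is~$q=n^{-1}$, not~$n^{-2/3}$; so for~$q=\Theta(1/n)$ there \emph{are} triangles, and the paper handles them via~$\E Y=\Theta(n^3q^3)=O(1)=o(n\sqrt{q})$ and Markov, rather than by claiming~$H$ is triangle-free.)

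The genuine gap is your attempt to replace~$\nu(H)$ by the edge count~$e(H)$. For~$q=D/n$ with any fixed~$D>0$ the difference~$e(H)-\nu(H)$ is \emph{not} lower order: already the number of path components~$P_2$ (three vertices, two edges, each contributing~$e-\nu=1$) has mean~$\sim (D^2/2)e^{-3D}n=\Theta(n)$ and variance~$\Theta(n)$, so~$e(H)-\nu(H)$ has mean~$\Theta(n)$ and fluctuation~$\Theta(\sqrt{n})=\Theta(n\sqrt{q})$. Thus concentration of~$e(H)$ in a window of width~$\omega n\sqrt{q}$ does not transfer to~$\nu(H)$, and anti-concentration of~$e(H)$ cannot be pushed to~$\nu(H)$ either, since the ``correction'' you propose to absorb is itself fluctuating on the target scale. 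Your argument does go through when~$q\ll 1/n$ (then whp every component has at most two vertices and~$\nu(H)=e(H)$), but it breaks precisely in the regime~$q=\Theta(1/n)$ that the theorem covers.

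The paper sidesteps this entirely. For~\ref{thm:alonkriv:conc} it applies Talagrand's inequality \emph{directly} to~$M$: the event~$\{M\ge s\}$ is certified by~$s$ non-edges and~$M$ is~$1$-Lipschitz in single edges, giving~$|M-\E M|\le \omega n\sqrt{q}/3$ whp without ever comparing to~$e(H)$. For~\ref{thm:alonkriv:nonconc} it does \emph{not} use a binomial local limit theorem for~$e(H)$; instead it conditions on the subgraph induced by components of size~$\ge 3$ (which fixes~$X_2$, the matching inside those components) and then shows that the number~$X_1$ of isolated edges, conditionally, has a unimodal distribution with ratio~$\Phi_G(m)=1+\Theta(i)/\lambda_1$ near its mode~$m_0\sim\lambda_1$, forcing it to spread over~$\Theta(\sqrt{\lambda_1})=\Theta(n\sqrt{q}e^{-qn})$ values. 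Since~$M=X_1+X_2$ with~$X_2$ fixed by the conditioning, this yields anti-concentration of~$M$ and hence of~$\chi(G_{n,p})$. The key idea you are missing is to isolate a piece of~$M$ (the isolated-edge count) whose fluctuation is both of the right order \emph{and} decouples cleanly from the rest.
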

\begin{proof}%
The main idea is that the study of study of~$\chi(\bGnp)$ effectively reduces to the study of the maximum matching in the complement.
Writing~$M$ for the maximum size (counting number of edges) of a matching in the complement of~$\Gnp$, we have~$\chi(\Gnp) \le M + (n-2M) = n-M$.
Furthermore, for any proper coloring of~$\Gnp$ with color classes~$(I_s)_{1 \le s \le \chi(\Gnp)}$ we have~$n=\chi(\Gnp) + \sum_{s}(|I_s|-1)$.
Defining~$X_i$ as the number of independent sets of size~$i$ in $\Gnp$, for~$Y := \sum_{i \ge 3} i X_i$ it then readily follows~that 
\begin{equation}\label{tightineq}
n-M-Y\le \chi(\Gnp)\le n-M.
\end{equation}
Since~$\bE X_{i+1} /\bE X_{i} = \mu_{i+1}/\mu_i \le nq^i=o(q)$ for $i\ge 3$, it routinely follows that~$\E Y = (1+o(1)) 3\mu_3 = \Theta(n^3q^3) = o(n\sqrt{q})$.  
Applying Markov's inequality to~$Y$, from~\eqref{tightineq} it follows that with high probability 
\begin{equation}\label{eq:alonkriv}
\chi(\Gnp)= n-M-o(n\sqrt{q}). 
\end{equation}

Gearing up towards applying a combinatorial version of Talagrand's inequality, 
note that $M\ge s$ can be certified by~$s$ non-edges of~$\Gnp$, 
and that adding or removing an edge from $\Gnp$ changes~$M$ by at most one. 
Since~$\bE M\le \binom{n}{2}q$, a standard application of Talagrand's inequality (such as~\cite[Theorem 2]{Talagrand}) thus yields
\begin{equation*}
\bP\bigpar{|M-\bE M| \ge \omega n\sqrt{q }/3} \: \le \: 2 \cdot \exp\biggpar{-\Theta\biggpar{\frac{\omega^2n^2q }{\bE M +\omega n\sqrt{q}}}}=o(1),
\end{equation*}
which together with~\eqref{eq:alonkriv} 
completes the proof of case~\ref{thm:alonkriv:conc}.

Finally, the remaining case~\ref{thm:alonkriv:nonconc} follows immediately from~\eqref{eq:alonkriv} and \refL{isoledges} below 
(since the complement of~$\Gnp$ has the same distribution as~$\Gnq$, where~$qn \le D+o(1)$ and $n\sqrt{q} \gg 1$ hold by assumption).
\end{proof}
\begin{lemma}\label{isoledges}
Let~$q=q(n) \in [0,1]$ satisfy~$n\sqrt{q}e^{-qn} \gg 1$, and define~$M$ as the maximum size (counting number of edges) of a matching in~$\Gnq$.  
Then for any~$\eps>0$ there is a constant~$d=d(\eps)>0$ such that for any interval~$I$ of length~${\bigl\lfloor dn\sqrt{q} e^{-qn} \bigr\rfloor}$ we have~$\bP(M\in I)\le \eps+o(1)$. 
\end{lemma}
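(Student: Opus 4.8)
The plan is to reduce the fluctuation of $M$ to the fluctuation of the number of isolated edges of~$\Gnq$. The first step is a structural decomposition of a typical matching. In the regime $qn \le D + o(1)$ the random graph $\Gnq$ is subcritical or barely critical, so almost all of its edges lie in components that are single edges (``isolated edges''), and the expected number of vertices in larger components is $O(n q^2 n) = O(q n^2 \cdot q)$, which is much smaller than $n\sqrt{q}e^{-qn} = \Theta(n\sqrt q)$ when $qn = O(1)$. Let $N_1 = N_1(\Gnq)$ denote the number of isolated edges. A maximum matching certainly includes every isolated edge, and the remaining edges of the matching all lie inside components of size $\ge 3$; hence, writing $V_{\ge 3}$ for the number of vertices in components of size at least three, we have the deterministic sandwich
\begin{equation*}
N_1 \: \le \: M \: \le \: N_1 + V_{\ge 3}.
\end{equation*}
So it suffices to show (a) $\E N_1 = (1+o(1)) \binom{n}{2} q (1-q)^{2n-4} = \Theta(n^2 q e^{-2qn})$, which after dividing by $n$ is $\Theta(n \sqrt q e^{-2qn})$ up to the $\sqrt q$ factor — here one must be slightly careful, since what we want to anti-concentrate is $N_1$ fluctuating on scale $\sqrt{\E N_1} = \Theta(n\sqrt q e^{-qn})$, matching the target interval length — and (b) $V_{\ge 3}$ is of smaller order than $n \sqrt q e^{-qn}$ with high probability, so it cannot smear out the anti-concentration.

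The second step is the anti-concentration of $N_1$ itself. The quantity $N_1$ is a sum of $\binom{n}{2}$ indicator variables (one per potential edge $\{i,j\}$, that the edge is present and both endpoints have no other neighbour), and these indicators are only weakly dependent; moreover $\var N_1 = \Theta(\E N_1) = \Theta(n^2 q e^{-2qn})$, which tends to infinity by the hypothesis $n\sqrt q e^{-qn}\gg 1$. I would establish a local central limit theorem, or more cheaply just a concentration-function bound of the form $\sup_{t}\P(N_1 = t) = O(1/\sqrt{\var N_1})$, via a standard method: for instance a Poisson approximation (Stein--Chen / Barbour--Holst) showing the total variation distance between $N_1$ and a Poisson variable of mean $\lambda = \E N_1$ is small, combined with the fact that a Poisson$(\lambda)$ assigns mass $O(1/\sqrt\lambda)$ to any single value, hence mass $O(\ell/\sqrt\lambda) \le O(d)$ to any interval of length $\ell \le d\sqrt\lambda$. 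Choosing $d = d(\eps)$ small enough that this bound is below $\eps/2$ gives $\P(N_1 \in I') \le \eps/2 + o(1)$ for every interval $I'$ of length $\lfloor d n\sqrt q e^{-qn}\rfloor$.

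Finally I would assemble the pieces: given an interval $I$ of length $\lfloor d n\sqrt q e^{-qn}\rfloor$, the event $M \in I$ forces $N_1 \in [\,\inf I - V_{\ge 3},\ \sup I\,]$, an interval whose length exceeds $|I|$ only by $V_{\ge 3}$; conditioning on the high-probability event that $V_{\ge 3} \le \lfloor d n\sqrt q e^{-qn}\rfloor$ (say), this is contained in a union of two intervals each of length $\lfloor d n \sqrt q e^{-qn}\rfloor$, so by the previous step $\P(M \in I) \le 2\cdot(\eps/4) + o(1) = \eps/2 + o(1)$; one absorbs the $o(1)$ and renames $d$ to finish. The main obstacle is step two — getting a clean anti-concentration (concentration-function) bound for $N_1$ with the right dependence on $d$ and $\eps$ while the dependence structure, though weak, is genuinely present; the Poisson/Stein--Chen route seems the most robust, since in this sparse regime $N_1$ really is close to Poisson. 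The component-structure estimates in steps one and three are routine for subcritical $\Gnq$ (e.g.\ via the expected number of tree components of each size, or by comparison with a branching process), so I do not expect difficulty there beyond bookkeeping.
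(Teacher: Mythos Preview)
Your sandwich $N_1 \le M \le N_1 + V_{\ge 3}$ is too loose to yield the lemma in the main regime of interest. The hypothesis $n\sqrt{q}\,e^{-qn}\gg 1$ permits $qn$ to be a positive constant (indeed, the application in \refT{alonkriv} has $qn \le D+o(1)$ with $D$ arbitrary), and then $V_{\ge 3}=\Theta(n)$: already the expected number of vertices in isolated paths on three vertices is $\sim \tfrac{3}{2}n(nq)^2 e^{-3qn}=\Theta(n)$. But the target interval length is $N=\Theta(n\sqrt{q}\,e^{-qn})=\Theta(\sqrt{n})$, so $V_{\ge 3}/N \to \infty$ and the slack in your sandwich destroys any anti-concentration inherited from~$N_1$. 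Your estimate $\E V_{\ge 3}=O(n^2q^2)$ is simply incorrect here; it would only be small when $q \ll n^{-4/3}$, a strict subrange of the lemma.

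The paper avoids this by using the \emph{exact} decomposition $M=X_1+X_2$, where $X_2$ is the maximum matching inside the union~$\bfG$ of components of size at least three. The key step is then to \emph{condition} on $\bfG=G$: this freezes $X_2$ and the number $Y$ of remaining vertices, and reduces the problem to anti-concentration of $X_1$ under the conditional law. That conditional law is explicit (it is $G_{Y,q}$ conditioned on every component having size at most two), and the paper analyzes it by computing the ratio $\P(X_1=m\mid \bfG=G)/\P(X_1=m+1\mid \bfG=G)$ in closed form, locating the mode $m_0\sim\lambda_1$, and showing the ratio stays within $1\pm O(N/\lambda_1)$ on windows of length $O(N)$ around~$m_0$. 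Note that your Poisson/Stein--Chen route targets the \emph{unconditional} distribution of $N_1$, which is not what is needed once you pass to the exact decomposition; and since $N_1$ and $X_2$ are genuinely dependent (they compete for vertices), unconditional anti-concentration of $N_1$ alone does not transfer to~$M$.
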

\begin{proof}[Proof-Outline of \refL{isoledges}]
The heuristic idea is that fluctuations in the number of isolated edges is a source of fluctuations of~$M$. 
Defining~${\bfG \subseteq \Gnq}$ as the induced subgraph of~$\Gnq$ consisting of all connected components of size at least three, we set $Y:=n-|V(\bfG)|$.
We also define~$X_1$ as the number of isolated edges in~$\Gnq$, and define~$X_2$ as the maximum size of a matching in~$\bfG$. 
The main idea is then to show that, after conditioning on a fixed graph~${\bfG=G}$ with~${Y \approx \E Y}$, the number~$X_1$ of isolated edges fluctuates by ${\Theta(\sqrt{\E X_1}) = \Theta(n\sqrt{q} e^{-qn})}$. 
Since~$X_2$ is determined by the graph~$G$ we conditioned on, this then allows us to show that~$M={X_1+X_2}$ fluctuates by the same order of magnitude. 
See \refApp{apx:isoledges} for the full technical details of the proof of \refL{isoledges} (which are rather tangential to the other arguments~here).  
\end{proof}
\begin{remark}[Extension of \refT{alonkriv}]\label{extend}
After refining the deviation from~$cn\sqrt{q}$ to~$cn\sqrt{q}e^{-nq}$ for suitable~${c=c(\eps)>0}$, 
the above proof of \refT{alonkriv}~\ref{thm:alonkriv:nonconc} carries over under the weaker assumption~$n\sqrt{q}e^{-nq}\gg \max\{1,(nq)^3\}$, 
which in particular holds for when $n^{-1} \ll nq \ll \log n$, say. 
\end{remark}

\subsection{Polynomial concentration jump: proof of \refT{surprise}}\label{large}
%
%
The following simple lemma proves a weak version of \refConj{bigconj}\ref{conj:upper} for~$r \ge 2$, 
with deviation~$\omega \sqrt{\mu_{r+1}}$ replaced by~$\omega \mu_{r+1}$. 
For our purposes, the crux is that \refL{polylogconc} establishes concentration of~$\chi(\bGnp)$ on~$n^{o(1)}$ values 
near the transition points~$1-p=n^{-2/r+o(1)}$.
By combining the case~$r=2$ of \refL{polylogconc} with \refR{extend} we thus immediately establish~\refT{surprise}, 
which essentially says that the typical length of the shortest interval containing~$\chi(\Gnp)$ 
undergoes a steep transition from~$\Omega(n^{1/2-\eps})$ down to~$O(n^{\eps})$ around edge-probability~${p=1-n^{-1+o(1)}}$, 
as illustrated by~\refF{fig:conj} and predicted by~\refConj{bigconj}. 
This polynomial concentration `jump' is intriguing, and we remark that a related phenomenon also occurs in~\cite{MMP,LMW}.
\begin{lemma}\label{polylogconc}
Let~$\omega=\omega(n)\to\infty$ as $n\to\infty$ be an arbitrary function, and let~$r \ge 2$ be an integer.
If~$p=p(n)$ satisfies~$(\log n)^{1/\binom{r}{2}} n^{-2/r}\ll 1-p \ll (\log n) n^{-2/(r+1)}$, 
then there is an interval of length~${\bigl\lfloor \omega \mu_{r+1} \bigr\rfloor}$ that contains $\chi(\Gnp)$ with high~probability. 
Furthermore, for~$1-p=n^{-2/r}x$ we have~$\omega \mu_{r+1} \le \omega x^{\binom{r+1}{2}}$.  
\end{lemma}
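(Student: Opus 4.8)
The plan is to pin $\chi(\Gnp)$ down to within $O(\mu_{r+1})$ of $n/r$; the claimed interval then follows because the hypothesis $q:=1-p\gg n^{-2/r}(\log n)^{1/\binom r2}$ forces $\mu_{r+1}\to\infty$ (indeed $\mu_{r+1}\gg(\log n)^{(r+1)/(r-1)}$, using $2\binom{r+1}{2}/r=r+1$). Recall that a proper colouring of $\Gnp$ is the same as a partition of $[n]$ into independent sets of $\Gnp$, equivalently into cliques of the complement~$\Gnq$.

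For the \emph{upper bound}, the same hypothesis places $q$ above the $K_r$-factor threshold of $\Gnq$ by a factor tending to infinity, so by the factor theorem of Johansson, Kahn and Vu the graph $\Gnq$ has a $K_r$-factor whp; for the case $r=2$ actually needed in~\refT{surprise} this is just the classical statement that $\Gnq$ has a near-perfect matching once $qn\gg\log n$. Applying this to the induced subgraph on the first $n'=n-(n\bmod r)$ vertices (which is distributed as $G_{n',q}$, with $q$ still above the threshold) gives a partition of $[n']$ into $n'/r$ independent sets of $\Gnp$, and giving each of the remaining $<r$ vertices its own colour yields $\chi(\Gnp)\le n'/r+r\le n/r+r$ whp.

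For the \emph{lower bound}, let $Z=Z(\Gnp)$ be the set of vertices lying in some independent set of $\Gnp$ of size at least $r+1$. Given any proper colouring with classes $I_1,\dots,I_\chi$, using $|I_s|-1\le(1-1/r)|I_s|$ when $|I_s|\le r$, and that the classes with $|I_s|\ge r+1$ are disjoint subsets of $Z$, one gets $n-\chi=\sum_s(|I_s|-1)\le(1-1/r)n+|Z|/r$, hence deterministically $\chi(\Gnp)\ge n/r-|Z|/r$. To control $|Z|$, write $X_s$ for the number of independent $s$-sets, so that $|Z|\le\sum_{s\ge r+1}sX_s$ and $\E|Z|\le\sum_{s\ge r+1}s\mu_s$; here the hypothesis $q\ll(\log n)n^{-2/(r+1)}$ gives $\mu_{s+1}/\mu_s\le nq^s\le nq^{r+1}\ll(\log n)^{r+1}/n\to0$ uniformly over $s\ge r+1$, so this sum is geometric and equal to $O_r(\mu_{r+1})$, and Markov's inequality yields $|Z|\le\omega^{1/2}\mu_{r+1}$ whp.

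Combining the two bounds, whp $\chi(\Gnp)$ lies in the interval $[\,n/r-\omega^{1/2}\mu_{r+1}/r,\ n/r+r\,]$, whose length $\omega^{1/2}\mu_{r+1}/r+r$ is at most $\omega\mu_{r+1}$ once $n$ is large (absorbing the $+r$ using $\mu_{r+1}\to\infty$ and the factor $\omega^{1/2}$ using $\omega\to\infty$), which proves the main assertion. The ``furthermore'' claim is pure arithmetic: with $1-p=n^{-2/r}x$ and $2\binom{r+1}{2}/r=r+1$ we get $\mu_{r+1}=\binom{n}{r+1}n^{-(r+1)}x^{\binom{r+1}{2}}\le x^{\binom{r+1}{2}}$, so $\omega\mu_{r+1}\le\omega x^{\binom{r+1}{2}}$. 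The step to watch is the upper bound: since $\mu_{r+1}$ can be as small as polylogarithmic, a merely $o(n)$-type $K_r$-packing would not suffice --- we genuinely need a result leaving only $O(1)$ vertices uncovered --- but this is exactly what the hypothesis $q\gg n^{-2/r}(\log n)^{1/\binom r2}$ (the $K_r$-factor threshold) buys us; everything else is elementary bookkeeping.
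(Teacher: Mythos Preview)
Your proof is correct and follows essentially the same approach as the paper: the upper bound via the Johansson--Kahn--Vu $K_r$-factor theorem to get $\chi(\Gnp)\le n/r+r$ whp, and the lower bound via Markov's inequality on the total size of large independent sets (your $|Z|$ is bounded by the paper's $Y=\sum_{i>r}iX_i$, and your inequality $\chi\ge n/r-|Z|/r$ is a mild sharpening of the paper's $\chi\ge(n-Y)/r$, though both are then controlled identically). Your explicit bookkeeping for $\mu_{r+1}\to\infty$ and the interval length, and your remark that an $o(n)$-packing would not suffice, are useful additions but do not change the underlying argument.
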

\begin{proof}
Using the assumed lower bound on~$q:=1-p$, a celebrated result of Johansson, Kahn and Vu~{\cite[Theorem~2.1]{JKV}}
implies that the complement of~$\bGnp$ with high probability contains$\floor{n/r}$ vertex-disjoint cliques of size~$r$, 
meaning that we can color all but~$n-r\floor{n/r} \le r$ vertices of~$\Gnp$ using at most~$n/r$ colors. 
By coloring the remaining uncolored vertices 
with distinct new colors, 
it follows that with high~probability 
\begin{equation}\label{eq:dense:upper}
\chi(\Gnp) \le n/r+r.
\end{equation}

To obtain a lower bound on~$\chi(\Gnp)$ we shall use a variant of the argument leading to~\eqref{tightineq}, again writing~$X_i$ for the number of independent sets of size~$i$ in~$\Gnp$. 
Indeed, for any proper coloring of~$\Gnp$ with color classes~$(I_s)_{1 \le s \le \chi(\Gnp)}$ we have~$n=\sum_{s}|I_s| \le Y + r \cdot \chi(\Gnp)$ for~$Y:=\sum_{i > r}iX_i$, so that 
\begin{equation}\label{eq:dense:lower}
\chi(\Gnp) \ge (n-Y)/r .
\end{equation}
Using the assumed upper bound on~$q=1-p$ we infer that~$\bE X_{i+1} /\bE X_{i} \le nq^i=o(q)$ for $i\ge r+1$ (with room to spare), and so it routinely follows that~$\E Y = \Theta(\mu_{r+1})$. 
Finally, in view of~\eqref{eq:dense:upper}--\eqref{eq:dense:lower}, 
now an application of Markov's inequality to~$Y$ readily completes the proof. 
\end{proof}
The deviation~$\omega \mu_{r+1}$ in \refL{polylogconc} can be further reduced using a two-round exposure argument (in some range of~$p$), 
but a proof of the conjectured deviation~$\omega \sqrt{\mu_{r+1}}$ seems to require additional~ideas. 

\subsection{The typical value}\label{largetypical}
Finally, an approximate coloring argument similar to \refS{large} also allows us to determine the with high probability asymptotics of the chromatic number~$\chi(\bGnp)$ 
for most edge-probabilities of form~$1-p=n^{-\Omega(1)}$, 
settling a recent conjecture of Isaev and Kang, see~\cite[Conjecture~1.2]{IK}. 
\begin{theorem}\label{isaevkang}
If the edge-probability~${p=p(n)}$ satisfies~${n^{-2/r} \ll 1-p \ll n^{-2/(r+1)}}$ for some fixed integer~${r \ge 1}$, 
then with high probability~${\chi(\bGnp)=(1+o(1))n/r}$.
\end{theorem}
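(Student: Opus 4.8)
The plan is to combine the two ingredients already developed in this section: the approximate-coloring upper bound of the form $\chi(\Gnp) \le n/r + o(n/r)$, and the matching lower bound $\chi(\Gnp) \ge (n-Y)/r$ where $Y=\sum_{i>r}iX_i$ and $X_i$ is the number of independent sets of size $i$. Both appeared (in the slightly cruder form needed for Lemma~\ref{polylogconc}) and need only a small strengthening because here $q=1-p$ may be as large as $n^{-2/(r+1)-o(1)}$ rather than polylogarithmically close to $n^{-2/r}$.

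\emph{Upper bound.} First I would invoke the Johansson--Kahn--Vu theorem~\cite{JKV} as in the proof of Lemma~\ref{polylogconc}: since $q \gg n^{-2/r}$ (indeed with room to spare, as $n^{-2/r} \ll 1-p$ is assumed), the complement of $\Gnp$ with high probability contains a perfect $K_r$-factor up to $O(1)$ leftover vertices, so $\chi(\Gnp) \le \lceil n/r \rceil \le n/r + 1 = (1+o(1))n/r$ with high probability.

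\emph{Lower bound.} For the matching lower bound I would reuse the elementary inequality $n = \sum_s |I_s| \le Y + r\,\chi(\Gnp)$, valid for every proper coloring (color classes have size at most $\alpha(\Gnp)$, and we split into those of size $\le r$ and those of size $>r$), giving $\chi(\Gnp) \ge (n-Y)/r$. It then remains to show $Y = o(n)$ with high probability. Here $\E Y = \sum_{i>r} i\,\mu_i$ with $\mu_i = \binom{n}{i}q^{\binom{i}{2}}$, and the assumed upper bound $q \ll n^{-2/(r+1)}$ gives $\mu_{r+1}/\mu_r \asymp n q^r \to 0$, hence $\mu_{r+1} = o(\mu_r) \le o\bigl(\binom{n}{r}q^{\binom{r}{2}}\bigr) = o(n^r q^{\binom{r}{2}})$; crucially $q\ll n^{-2/(r+1)}$ also forces $n^r q^{\binom{r}{2}} = (n^2 q^{r-1})^{r/2} \ll$ (a small power of $n$), in fact one checks $\mu_{r+1} = n^{-\Omega(1)} \cdot n \ll n$ — so $\E Y = \Theta(\mu_{r+1}) = o(n)$. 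A single application of Markov's inequality to the nonnegative random variable $Y$ then yields $Y = o(n)$ with high probability, and combining with the two bounds gives $\chi(\Gnp) = (1+o(1))n/r$.

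The main obstacle, such as it is, is purely bookkeeping: verifying that the Johansson--Kahn--Vu hypothesis is met across the whole open range $n^{-2/r}\ll 1-p\ll n^{-2/(r+1)}$ (their theorem needs $q$ a bit above the threshold $n^{-2/(r-1)}$ for $K_r$-factors — one must check $n^{-2/r}$ comfortably clears this, which it does since $-2/r > -2/(r-1)$), and confirming $\E Y = \Theta(\mu_{r+1}) = o(n)$ uniformly, i.e. that the geometric-type decay $\mu_{i+1}/\mu_i = O(nq^i) = o(1)$ for all $i \ge r+1$ makes the tail sum $\sum_{i>r} i\mu_i$ dominated by its first term. Neither step introduces genuinely new difficulty beyond what was already handled for Lemma~\ref{polylogconc}; the only real content is that one no longer needs the polylogarithmic slack near $n^{-2/r}$ for the upper bound, and the $o(n)$ (rather than $n^{o(1)}$) conclusion for $Y$ is actually \emph{easier} since $\mu_{r+1}$ is only required to be $o(n)$, which holds throughout the stated range.
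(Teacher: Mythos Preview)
Your lower bound is fine (and matches the paper), but the upper bound has a genuine gap: you misstate the Johansson--Kahn--Vu threshold. The threshold for a $K_r$-factor in $G_{n,q}$ is not $n^{-2/(r-1)}$ (that is the appearance threshold for a single copy of $K_r$) but rather $n^{-2/r}(\log n)^{1/\binom{r}{2}}$. In particular, for $q$ in the range $n^{-2/r} \ll q \ll n^{-2/r}(\log n)^{1/\binom{r}{2}}$---which is permitted by the hypothesis of the theorem---the complement $G_{n,q}$ typically contains vertices lying in no copy of $K_r$ at all, and hence has no $K_r$-factor. So JKV simply does not apply in this subrange, and this is precisely why Lemma~\ref{polylogconc} carries the extra polylogarithmic slack in its lower hypothesis on $1-p$.

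The paper handles this by invoking instead an older and weaker packing result of Ruci\'nski~\cite{rucpm}: under just $q \gg n^{-2/r}$, the graph $G_{n,q}$ with high probability contains $(1-o(1))n/r$ vertex-disjoint copies of $K_r$. This is not a perfect factor, but it covers all but $o(n)$ vertices, and since we only need $\chi(\Gnp) \le n/r + o(n)$ for the asymptotic statement, that suffices. Your proposal would be repaired by swapping JKV for this near-perfect packing result.
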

\begin{proof}
For the lower bound we exploit that the argument leading to~\eqref{eq:dense:lower} 
again gives~$\chi(\bGnp)\ge (n-Y)/r$ and~$\E Y  = \Theta\Bigpar{n^{r+1}(1-p)^{\binom{r+1}{2}}}$. 
Since~$1-p \ll n^{-2/(r+1)}$ implies~$\E Y = o(n)$, now an 
application of Markov's inequality to~$Y$ readily shows that with high probability~$\chi(\bGnp)\ge (1-o(1))n/r$.

We also include the upper bound argument from~\cite{IK} for completeness. 
Namely, using~$1-p \gg n^{-2/r}$, an old 
result of Ruci{\'n}ski~\cite[Theorem~4]{rucpm} implies that 
the complement of~$\bGnp$ with high probability contains~${(1-o(1))n/r}$ vertex-disjoint cliques of size~$r$, 
meaning that we can color all but~$o(n)$ vertices of~$\Gnp$ using at most~$n/r$ colors. 
By coloring the remaining uncolored vertices 
with distinct new colors, 
it follows that with high probability~$\chi(\bGnp) \le n/r + o(n) \le (1+o(1))n/r$, 
completing the proof. 
\end{proof}
Note that the estimate~$\chi(\bGnp)=(1+o(1))n/r$ from \refT{isaevkang} can also be rewritten as 
\begin{equation}\label{eq:chi:Gnp}
\chi(\bGnp) = \frac{(1+o(1))n}{\bigfloor{2\log_{1/(1-p)}(np)}}.
\end{equation}
Interestingly, the same expression~\eqref{eq:chi:Gnp} also gives the with high probability asymptotics of~$\chi(\bGnp)$ 
when the edge-probability~${p=p(n)}$ satisfies~$1/n \ll p \le 1-n^{-o(1)}$, see~\cite{B1988,L1991a,IK}. 
We leave it as an intriguing open problem to determine the asymptotics of~$\chi(\bGnp)$ for edge-probabilities of the form~$p=1-\Theta(n^{-2/r})$,
in which case the behavior of related graph parameters also remains open, see~\cite[Section~4.2]{GPW}.

\bigskip{\noindent\bf Acknowledgements.} 
We thank Annika Heckel for valuable discussions about~\refConj{bigconj}. 
We are also grateful to the referees and Christian Houdr{\'e} for helpful suggestions concerning the presentation.

\footnotesize
\bibliographystyle{plain}

\normalsize

\begin{appendix}

\section{Appendix: proofs of Lemmas~\ref{workhorse}--\ref{Degeneracy}}\label{sec:app}
%
%
\begin{proof}[Proof of \refL{workhorse}]	
Define ${\lambda=\lambda(n,p,\is)}$ as the smallest integer with ${\bP(\chi(\bGnp)\le \lambda)\ge 1/\is}$. 
Let~$Y$ denote the minimum size of a vertex subset ${\sset\subseteq [n]}$ with ${\chi\bigpar{\bGnp\bigsqpar{[n] \setminus \sset}}\le \lambda}$. 
By definition of~$\lambda$ we~have 
\begin{equation}\label{eq:wh:anchor}
\bP(\chi(\Gnp)< \lambda) \: \le \: 1/\is \: \le \: \bP(Y=0).
\end{equation}
Using the vertex-exposure approach to the bounded difference inequality,
we write~$Y=Y(X_1,\dots, X_n)$ where the independent auxiliary variables $X_i:=\{(i,j)\in E(\Gnp) \: : \: i<j\}$ contain the edges of~$\Gnp$ between vertex~$i$ and vertices~$\{i+1, \ldots, n\}$. 
Note that changing a single~$X_i$ can change~$Y$ by at most one. 
In view of~\eqref{eq:wh:anchor}, now a routine application of the bounded differences inequality (such as~\cite[Corollary~2.7]{JLR}) to~$Y$ yields 
\[ 1/\is \le \bP(Y=0) = \bP(Y \le \E Y - \E Y)\le \exp\biggpar{\frac{-(\bE Y)^2}{2n}}, \]
which implies that~$\bE Y\le \sqrt{2n\ln \is}$.  
Since~$z=\is \sqrt{n} \gg \sqrt{2n\ln \is}$, 
by again applying the bounded differences inequality to~$Y$ it then follows~that 
\[\bP(Y\ge z )\le \bP\bigpar{Y\ge \bE Y+\sqrt{2n\ln \is}}\le 
1/\is ,\]
which together with estimate~\eqref{eq:wh:anchor} and the `with high probability' event~\eqref{eq:workhorse:ass} implies that 
\begin{equation}\label{eq:first}
\bP\bigpar{\lambda \le \chi(\Gnp) \le \lambda + \Gamma } \ge 1- 2/\is-\Pr\bigpar{\text{event~\eqref{eq:workhorse:ass} fails}} = 1-o(1).
\end{equation}
Combining the definition of the median~$\Lambda=\Lambda(n,p)$ with~\eqref{eq:wh:anchor} and~\eqref{eq:first}, it deterministically follows that
\[  \lambda \: \le \: \Lambda \le \lambda+\Gamma \]
for all sufficiently large~$n$, which together with~\eqref{eq:first} then completes the proof of \refL{workhorse}. 
\end{proof}
\begin{proof}[Proof of \refL{greedy}]
Given a vertex subset $W\subseteq V(G)$ of size~$|W| \ge u$, we construct an independent set greedily: 
set $W_0=W$ and, for $i\ge 1$, pick $w_i\in W_{i-1}$ with minimal degree in $G[W_{i-1}]$ and set 
\[W_i=\bigl\{v\in W_{i-1} \ : \  v \text{ not adjacent to } w_i \bigr\}.\] 
We terminate as soon as~$W_j$ is empty, in which case we obtain an independent set~$\{w_1,\dots, w_j\} \subseteq W$. 
If~$|W_{i-1}|\ge u$ holds, then we know that~$w_i$ has degree at most $d(|W_i|-1)$ in $G[W_i]$, implying that 
\begin{equation*}
|W_i|
\ge (1-d)(|W_{i-1}|-1)
\ge  (1-d)(1-1/u)|W_{i-1}|.
\end{equation*}
It follows that $W_i$ is non-empty as long as \[i-1\le-\log_{(1-d)(1-1/u)} \bigpar{|W|/u} =: I\bigpar{W|},\]
so we terminate with an independent set $\{w_1,\dots, w_j\} \subseteq W$ of size $j \ge \bigfloor{I(|W|)+1} \ge I(|W|)$.
\end{proof}
\begin{proof}[Proof of \refL{Degeneracy}]
We apply induction on the number of vertices.
The base case~${|V(G)|=1}$ is trivial. 
For the induction step~$|V(G)|>1$, we pick a vertex~$v$ of minimum degree, 
inductively color the subgraph~$G-v$ (obtained by removing~$v$) using~$r+1$ colors, 
and color~$v$ with a color not used by its at most~$r$~neighbors. 
\end{proof}

\section{Appendix: proof of \refL{isoledges}}\label{apx:isoledges}
\begin{proof}[Proof of \refL{isoledges}]
We keep the setup from the proof-outline in \refS{small}.
In particular, we define ${\bfG \subseteq \Gnq}$ as the induced subgraph of~$\Gnq$ consisting of all connected components of size at least three, 
define~$X_1$ as the number of isolated edges in~$\Gnq$, and define $X_2$ as the maximum size (counting number of edges) of a matching in~$\bfG$. 
Turning to the typical number $Y:=n-|V(\bfG)|$ of vertices in components of size at most two, 
note that the expected number of isolated vertices in~$\Gnq$ is
\[\lambda_0:=n (1-q)^{n-1} \sim n e^{-qn},\]
and that the expected number of isolated edges in~$\Gnq$ is
\[\lambda_1:=\binom{n}{2} q (1-q)^{2n-4} \sim \frac{n^2qe^{-2qn}}{2},\]
where we use the shorthand~$a_n \sim b_n$ for~$a_n = (1+o(1)) b_n$ to avoid clutter. 
Note that~$\E Y=\lambda_0+2\lambda_1$.
By a routine second moment calculation, for suitable~$\eps_n=o(1)$ it follows that with high probability 
\begin{equation}\label{eq:Y:whp}
Y=(1\pm\eps_n)(\lambda_0+2\lambda_1) .
\end{equation}
Deferring the choice of the sufficiently small constant~${d=d(\eps)>0}$, set
\begin{equation}\label{def:N}
N:=\floor{dn\sqrt{q} e^{-qn}} \sim d \sqrt{2\lambda_1} \gg 1 .
\end{equation}
Since~$M=X_1+X_2$, with~$X_2$ determined by~$\bfG$, 
for any interval~$I$ of length~$N$ it follows~that 
\begin{equation}\label{eq:conditioning}
\begin{split}
\bP(M\in I) & \; \le \; \Pr\bigpar{\text{estimate~\eqref{eq:Y:whp} fails}} + \sum_{G} \bP\bigpar{X_1+X_2\in I \mid \bfG=G} \bP(\bfG=G)\\ 
& \; \le \; o(1) \: + \: \max_{G,J} \bP\bigpar{X_1\in J \mid \bfG=G} , 
\end{split}
\end{equation}
where~$J$ is taken over all intervals of length~$N$, 
and~$G$ is taken over all graphs for which (i)~all connected components all have size at least three 
and (ii)~its number of vertices~$n-Y$ is compatible with~\eqref{eq:Y:whp}. 

To complete the proof, it suffices to show that the final probability appearing in~\eqref{eq:conditioning} 
is at most~$\eps$ for sufficiently large~$n$. 
To this end we henceforth fix a graph~$G$ that occurs in the maximum of~\eqref{eq:conditioning} described above, 
and let~$J$ be any interval of length~$N$ that maximizes~${\bP(X_1\in J \mid \bfG=G)}$. 
We define~$\ifG \subseteq \Gnq$ as the induced subgraph of~$\Gnq$ consisting of all connected components of size at most two. 
Note that~$\ifG$ has~$Y$ vertices, with~$Y=n-|V(\bfG)|$ determined by~$\bfG$. 
After conditioning on~$\bfG=G$, a moment's thought reveals that~$\ifG$ has the same distribution as~$G_{Y,q}$ conditioned on all connected components having size at most two.
Since there are~$\tfrac{1}{u!}\prod_{0 \le i < u}\binom{Y-2i}{2}$ graphs on~$Y$ vertices that consist of~$u$~isolated edges and~${Y-2u}$~isolated vertices, 
it follows for any integer~${0 \le m \le Y/2-1}$~that 
\begin{equation}\label{ratio}
\begin{split}
\Phi_G(m) &:= \frac{\bP\bigpar{X_1=m \mid \bfG=G}}{\bP\bigpar{X_1=m+1 \mid \bfG=G}} \\
& = \frac{\tfrac{1}{m!}\prod_{0 \le i < m}\binom{Y-2i}{2} \cdot q^{m} (1-q)^{\binom{Y}{2}-m}}{\tfrac{1}{(m+1)!}\prod_{0 \le i \le m}\binom{Y-2i}{2} \cdot q^{m+1} (1-q)^{\binom{Y}{2}-(m+1)}} = \frac{(1-q)(m+1)}{q\binom{Y-2m}{2}}.
\end{split}
\end{equation}
To get a handle on these probabilities, first note that~$\lambda_0 = \Omega(\lambda_1)$ follows from $\lambda_1/\lambda_0 = \Theta(nqe^{-nq}) = O(1)$. 
Using~\eqref{eq:Y:whp} we then deduce $\sqrt{q} \: Y = \Theta(\sqrt{q}\lambda_0) = \Theta(n\sqrt{q} e^{-qn}) \gg 1$ as well as 
\[ 
\Phi_G\bigl(0\bigr) 
= \frac{\Theta(1)}{(\sqrt{q} \:Y)^2} \ll 1
\qquad\text{ and }\qquad 
\Phi_G\bigl(Y/2-1\bigr) 
= \frac{\Theta(\sqrt{q}\:Y)}{q^{3/2}} \gg 1 , 
\]
so by the intermediate value theorem there exists a real~${0 < m_0< Y/2-1}$ satisfying ${\Phi_G(m_0)=1}$.
Noting that~$\Phi_G(m)$ is a strictly increasing function, it now is routine to check that ${m_0\sim \lambda_1}$, 
since for any ${m \sim \lambda_1}$ we have ${Y-2m \sim \lambda_0 + o(\lambda_1) \sim \lambda_0}$ and~${\Phi_G(m) \sim \lambda_1/(q \lambda_0^2/2) \sim 1}$. 
Using~$\Phi_G(m_0)=1$ together with ${Y-2m_0 \sim \lambda_0 = \Omega(\lambda_1)}$ and ${m_0 \sim \lambda_1}$, 
for any integer~$m=m_0+i$ with $i=o(\lambda_1)$ we similarly infer~that 
\begin{equation}\label{controlratio}
\begin{split}
\Phi_G(m) & = \frac{\Phi_G(m_0+i)}{\Phi_G(m_0)} = \frac{\binom{y-2m_0}{2}}{\binom{y-2m_0-2i}{2}} \frac{m_0+i+1}{m_0+1} \\
& = \left(1+\frac{2i}{y-2m_0-2i+O(1)}\right)^2 \left(1+\frac{i}{m_0+1}\right) =1+\frac{\Theta(i)}{\lambda_1}.
\end{split}
\end{equation}
Since the probability ratio~$\Phi_G(m)$ from~\eqref{ratio} is a strictly increasing function, 
the interval~$J$ must intersect~${\bigl\{\ceil{m_0}-1,\ldots, \floor{m_0}+1\bigr\}}$ in at least one element, 
because otherwise we could increase the probability ${\bP(X_1\in J \mid \bfG=G)}$ 
by shifting the interval~$J$ by plus or minus one (contradicting maximality). 
Furthermore, by definition of~$m_0$, we have~${\Phi_G(m) \le 1}$ for all~$m \le m_0$. 
By choosing the constant~$d=d(\eps)>0$ sufficiently small, 
for all integers~$m \in J$ and~$1 \le s\le \ceil{2/\eps}$ 
it follows in view of~\eqref{ratio}, \eqref{controlratio} and~\eqref{def:N}~that
\begin{equation}\label{ratio:m}
\begin{split}
\frac{\bP\bigpar{X_1=m \mid \bfG=G}}{\bP\bigpar{X_1=m+2Ns \mid \bfG=G}} 
= \prod_{0\le i<2sN} \Phi_G(m) 
\le \left(1+\frac{\Theta(sN)}{\lambda_1}\right)^{2Ns} \le e^{O(s^2d^2)} \le 2 . 
\end{split}
\end{equation}
Since the interval~$J$ has length~$N$, 
by applying~\eqref{ratio:m} to each~$m \in J$ it then follows that 
\begin{align*}
2/\eps \cdot \bP\bigpar{X_1\in J \mid \bfG=G} 
\le  \sum_{1 \le s \le \ceil{2/\eps}} 2 \; \bP\bigpar{X_1-2Ns \in J \mid \bfG=G} \le 2 ,
\end{align*}
which together with estimate~\eqref{eq:conditioning} completes the proof of \refL{isoledges}, as discussed. 
\end{proof}

\end{appendix}

\end{document}